\newtheorem{thm}{Theorem}[section]
\newtheorem{lem}[thm]{Lemma}
\newtheorem{defn}[thm]{Definition}
\newtheorem*{prob}{Problem}
\theoremstyle{definition}
\renewcommand{\Re}{\mathbb{R}}
\newcommand{\Red}{\Re^d}
\newcommand{\norm}[1]{\left\|#1\right\|}
\newcommand{\sch}[1]{\norm{#1}_{C_p^d}}
\newcommand{\Sch}{\mathcal{P}^d}
\newcommand{\EE}{{\mathbb E}}
\newcommand{\EEop}{\mathop{\mathbb E}}
\newcommand{\tr}{\mathrm{trace}}
\newcommand{\conv}{\mathrm{conv}}
\newcommand{\bd}[1]{\mathrm{bd}\left(#1\right)}
\newcommand{\iprod}[2]{\left\langle#1,#2\right\rangle}
\newcommand{\Pcal}{{\mathcal P}}
\DeclareMathOperator{\inter}{int}
\newcommand{\pol}{^\circ}
\newcommand{\iik}{i\in[k]}
\newcommand{\iim}{i\in[m]}
\newcommand{\Qs}{Q_1,\ldots,\allowbreak Q_k}
\newcommand{\qs}{q_1,\ldots,\allowbreak q_k}
\newcommand{\qbars}{\bar q_1,\ldots,\allowbreak \bar q_k}
\newcommand{\rj}{\mathrm{r}}
\newcommand{\st}{:\;}
\newcommand{\noshow}[1]{}
\newcommand\invisiblesection[1]{%
  \refstepcounter{section}%
  \addcontentsline{toc}{section}{\protect\numberline{\thesection}#1}%
  \sectionmark{#1}}
\title{Approximation of the average of some random matrices}
\author{Grigory Ivanov\address{Grigory Ivanov: Institute of Discrete 
Mathematics and 
Geometry, TU Wien, Vienna; Moscow Inst. of Physics and Technology, 
Moscow, Russia}
\and
M\'arton Nasz\'odi\address{M\'arton Nasz\'odi: Alfr\'ed R\'enyi Inst. of Math.; 
MTA-ELTE Lend\"ulet Combinatorial Geometry Research Group;
Dept. of Geometry, Lor\'and E\"otv\"os University, Budapest}
\and
Alexandr Polyanskii\address{Alexandr Polyanskii: Moscow Inst. of Physics and 
Technology, 
Moscow, Russia; Institute for Information Transmission Problems RAS, Moscow, 
Russia; Caucasus Mathematical Center, Adyghe State University, Maikop, Russia}
}
\subjclass[2010]{Primary 15A60; Secondary 52A20, 46B07}
\keywords{John decomposition of the identity, Lust--Picard inequality, matrix 
approximation, positive definite matrices, non-symmetric matrices}
\begin{document}

\begin{abstract}
Rudelson's theorem states that if for a set of unit vectors $u_i$ and positive 
weights $c_i$, we have that $\sum c_i u_i\otimes u_i$ is the identity operator 
$I$ on ${\mathbb R}^d$, then the sum of a \emph{random sample} of $Cd\ln d$ of 
these diadic products is close to $I$. The $\ln d$ term cannot be removed.

On the other hand, the recent fundamental result of Batson, Spielman and 
Srivastava and its improvement by Marcus, Spielman and Srivastava show that the 
$\ln d$ term can be removed, if one wants to show the \emph{existence} of a 
good approximation of $I$ as the average of a few diadic products. It is known 
that 
essentially the same proof as Rudelson's yields a more general statement about 
the average of \emph{positive semi-definite matrices}. 

First, we give an example of an average of positive semi-definite matrices 
where \emph{there is no approximation} of this average by $Cd$ elements. Thus,
the result of Batson, Spielman and Srivastava cannot be extended to this wider 
class of matrices.

Next, we present a stability version of Rudelson's result on positive 
semi-definite matrices, and thus, extend it to certain non-symmetric matrices. 
This yields applications to the study of the Banach--Mazur distance of convex 
bodies. 

Finally, we show that in some cases, one needs to take a subset of the 
vectors of order $d^2$ to approximate the identity.
\end{abstract}

\maketitle

\section{Introduction}
For vectors $u,v\in\Red$, their \emph{tensor product} (or, diadic product) is a 
linear operator on $\Red$ defined as $(u\otimes v)x=\iprod{u}{x}v$ for every 
$x\in\Red$, where $\iprod{u}{x}$ denotes the standard inner product. For a 
positive integer $k$, we use the notation $[k]=\{1,\ldots,k\}$, and the 
\emph{cardinality} of a multi-set $\sigma$ (counting multiplicities) is denoted 
by $|\sigma|$.

A random vector $v$ in $\Red$ is called \emph{isotropic}, if 
$\EE v\otimes v=I$, 
where $\EE$ denotes the expectation of a random variable, and $I$ is the 
identity operator on $\Red$.

According to Rudelson's theorem \cite{Ru99}, if we take $k$ independent copies 
$y_1, \dots , y_k$ of an isotropic random vector $y$ in $\Red$ for which 
$|y|^2\leq\gamma$ almost surely, with
\begin{equation*}
 k=\left\lceil\frac{c\gamma\ln d}{\varepsilon^2}\right\rceil,
\mbox{ then }\; 
\EE\norm{\frac{1}{k}\sum_{i=1}^k y_i\otimes y_i - I}\leq\varepsilon,
\end{equation*}
where $\norm{A}=\max\{\iprod{Ax}{Ax}^{1/2}\st x\in\Red, \iprod{x}{x}=1\}$ 
denotes the \emph{operator norm} of the matrix $A$.
We say that a sequence of unit vectors $u_1,\ldots,u_m$ in $\Red$ yields a 
\emph{John decomposition} of $I$, if $\frac{1}{d}I\in\conv\{u_i\otimes u_i\st 
\iim\}$, that is, if there are scalars $\alpha_1,\ldots,\alpha_m\geq0$ with 
$\sum_{i=1}^m \alpha_i=1$ such that 
\begin{equation}\label{eq:john}
 \sum_{i=1}^m \alpha_i u_i\otimes u_i=\frac{1}{d}I.
\end{equation}

Rudelson's result applies in this setting as well. 
The coefficients $\alpha_i$ define a probability distribution on $[m]$. Let 
$\sigma=\{i_1,\ldots,i_k\}$ be 
a multiset obtained by $k$ independent draws from $[m]$ according to this 
distribution, and consider the following average of matrices 
$\frac{1}{k}\sum_{i\in\sigma} \sqrt{d}u_{i}\otimes \sqrt{d}u_{i}$. It follows 
that, in expectation, this average is not farther than $\varepsilon$ from $I$ 
in the operator norm, provided that $k$ is at least $\frac{cd\ln 
d}{\varepsilon^2}$, where $c$ is some constant.

Our starting point is an observation according to which Rudelson's proof yields 
the following more general statement.
\begin{thm}\label{prop:rudelson}
Let $0<\varepsilon<1$ and 
$Q_1,\ldots,Q_k$ be independent random matrices 
distributed according to (not necessarily identical) probability 
distributions $\Pcal_1,\ldots,\Pcal_k$ on the set $\Sch$ of $d\times d$ real 
positive semi-definite matrices such that $\EE Q_i=A$ for some $A\in\Sch$  and 
all $\iik$. Set $\gamma=\EE(\max_{\iik}\norm{Q_i})$, and assume 
that
\[k\geq \frac{c\gamma(1+\norm{A})\ln d}{\varepsilon^2},\]
where $c$ is an absolute constant.
Then
\begin{equation}
\label{eq:rudelson_ext_theorem}
\EE\norm{\frac{1}{k}\sum_{\iik} Q_i - A}\leq\varepsilon.
\end{equation}
\end{thm}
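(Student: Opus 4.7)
The plan is to follow Rudelson's original argument essentially unchanged, carefully tracking how $\gamma$ and $\norm{A}$ enter the final estimate. The three ingredients are symmetrization, the non-commutative Khintchine (Lust--Picard) inequality for Rademacher sums of Hermitian matrices, and the Loewner-order inequality $Q^2 \preceq \norm{Q}\,Q$ valid for PSD $Q$. Crucially, the first two steps only require independence of the $Q_i$, not identical distribution, so the generalization to non-identical $\Pcal_i$ in the statement costs nothing.

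First, I would introduce independent copies $Q_1',\ldots,Q_k'$ with $Q_i' \sim \Pcal_i$, and independent Rademacher signs $\varepsilon_i$. Since $A = \EE Q_i'$, Jensen's inequality and the standard symmetrization identity give
\[
\EE\norm{\tfrac{1}{k}\sum_{\iik} Q_i - A} \leq \EE\norm{\tfrac{1}{k}\sum_{\iik}(Q_i - Q_i')} = \EE\norm{\tfrac{1}{k}\sum_{\iik}\varepsilon_i(Q_i - Q_i')} \leq \tfrac{2}{k}\EE\norm{\sum_{\iik}\varepsilon_i Q_i}.
\]
Conditioning on the $Q_i$ and applying the Lust--Picard inequality to the Rademacher sum yields
\[
\EE_\varepsilon\norm{\sum_{\iik}\varepsilon_i Q_i} \leq C\sqrt{\ln d}\,\norm{\sum_{\iik} Q_i^2}^{1/2}.
\]

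The PSD hypothesis is used precisely at the next step: $Q_i^2 \preceq \norm{Q_i}\,Q_i$, so $\sum Q_i^2 \preceq (\max_i\norm{Q_i})\sum Q_i$ in the Loewner order, giving $\norm{\sum Q_i^2} \leq (\max_i\norm{Q_i})\cdot\norm{\sum Q_i}$. Writing $Z := \norm{\tfrac{1}{k}\sum Q_i - A}$ and bounding $\norm{\sum Q_i} \leq k\norm{A} + kZ$ via the triangle inequality, I would combine the three ingredients using Cauchy--Schwarz to pull the square root outside the expectation, arriving at the self-bounding inequality
\[
\EE Z \leq \tfrac{C'\sqrt{\gamma\ln d}}{\sqrt{k}}\,\sqrt{\norm{A} + \EE Z}.
\]

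Squaring gives a quadratic in $\EE Z$; solving it and then splitting into the regimes $\norm{A}\leq 1$ and $\norm{A}\geq 1$ produces $\EE Z \leq \varepsilon$ under the hypothesis $k \geq c\gamma(1+\norm{A})\ln d/\varepsilon^2$ (the $+1$ absorbs the small-$\norm{A}$ regime, using $\varepsilon<1$). I expect the only delicate point to be the use of Cauchy--Schwarz when forming the self-bound: the square root must be taken outside carefully so that $\EE\max_i\norm{Q_i} = \gamma$ appears with exponent $1$ rather than some higher moment, and so that $Z$ reappears on the right only under the square root, enabling a clean quadratic solution with the advertised dependence on both $\gamma$ and $\norm{A}$.
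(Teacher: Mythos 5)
Your proposal follows the paper's proof essentially step for step: symmetrization by Rademacher signs, the non-commutative Khintchine (Lust--Piquard) inequality, the Loewner bound $Q_i^2 \preceq \norm{Q_i} Q_i$ for PSD matrices, Cauchy--Schwarz to make $\gamma = \EE\max_i\norm{Q_i}$ appear to the first power, and a self-bounding quadratic in $\EE\norm{D}$. The only cosmetic difference is that the paper routes the argument through the Schatten $p$-norm with $p=\ln d$ (using $\norm{A}\le\sch{A}\le e\norm{A}$) to extract the $\sqrt{\ln d}$ factor, whereas you invoke the operator-norm form of the inequality directly; these are equivalent for symmetric summands.
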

This result is not new.
Oliveira mentions that \cite{Ru99} implicitly proves Theorem~1 of \cite{Ol10}, 
which in turn easily yields Theorem~\ref{prop:rudelson}. See also 
\cite[Theorems~5.1 and 4.1]{Tr16},  and similar results in \cite{CGT12}.
For completeness, we provide a proof that follows closely Rudelson's argument
\cite{Ru99}.

As was noticed by Rudelson \cite[Remark~3.3]{Ru97}, and by Aubrun in 
\cite{aubrun2007sampling}, the logarithmic term in Theorem~\ref{prop:rudelson} 
cannot be omitted. More specifically, even if the $Q_i$ are diadic products, 
\eqref{eq:rudelson_ext_theorem} does not hold for 
$k=\frac{c\gamma(1+\norm{A})}{\varepsilon^2}$.

If instead of considering the \emph{expectation} of the average of randomly 
chosen $u_i\otimes u_i$, we want to show the \emph{existence} of a small subset 
of the set of $u_i\otimes u_i$ whose average is close to $I$, then the picture 
changes, as was shown by a 
completely different approach introduced in the fundamental paper of Batson, 
Spielman and Srivastava \cite{BSS14}. It was developed further by Marcus, 
Spielman and Srivastava \cite{MSS15} (see also \cite{S12}), and by Friedland 
and Youssef \cite{FY17}. In \cite{FY17}, it is shown that if a sequence of unit 
vectors $u_1,\ldots,u_m$ in $\Red$ yields a John decomposition of $I$, then 
there is a (deterministically obtained) multi-subset $\sigma$ of $[m]$ of size 
$|\sigma|=\frac{cd}{\varepsilon^2}$ with 
$\norm{\frac{1}{|\sigma|}\sum_{i\in\sigma} 
\sqrt{d}u_i\otimes \sqrt{d}u_i-I} <\varepsilon$.

The first contribution of the present paper is that we cannot remove the $\ln 
d$ 
term in the setting of Theorem~\ref{prop:rudelson}, even when we choose the 
matrices $Q_i$ deterministically. In fact, in general, there \emph{does not 
exist} a good approximation of size $O(d)$.

\begin{thm}\label{theorem:rudelsonneedslog}
For any integer $d\geq 8$, any $1\leq \gamma$, and any 
$0<\varepsilon<\frac{1}{16}$, 
there are positive semi-definite matrices $Q_1,\ldots,Q_n$ in $\Sch$ with 
$I\in\conv\{Q_i\}$ and $\norm{Q_i}\leq 2 \gamma$ 
for all $i\in [n]$ such that, 
for any non-empty multi-subset $\sigma$ of $[n]$ of size 
$|\sigma|\leq \gamma \frac{\lfloor\log_2 
d\rfloor}{96\varepsilon}$,
we have 
\[
	\norm{\frac{1}{|\sigma|}\sum_{i\in \sigma} Q_i - I} \geq \varepsilon.
\]
\end{thm}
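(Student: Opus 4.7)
The plan is to give an explicit multi-scale construction showing that the logarithmic term in Rudelson's inequality cannot be removed even when the matrices are chosen deterministically. The main idea is a dyadic hierarchy with $m=\lfloor\log_2 d\rfloor$ scales, in which each scale contributes a factor of order $\gamma/\varepsilon$ to the required subset size, and all $m$ scales must be balanced simultaneously.

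\textbf{Construction.} I partition (or cover) $[d]$ by dyadic blocks $B_1,\ldots,B_m$ whose sizes are powers of $2$. For each $j\in[m]$ I take a ``spike'' matrix $Q_j$ of operator norm $2\gamma$ that equals $2\gamma$ on $B_j$ and the identity elsewhere, giving $\norm{Q_j}=2\gamma$ and $Q_j\in\Sch$. To ensure $I\in\conv\{Q_j\}$, I add one or two ``correction'' matrices of norm at most $1$ that are coupled to all blocks simultaneously, and verify that a specific combination $I=\sum_j\alpha_j Q_j+\beta R$ holds with weights $\alpha_j$ of order $1/(2\gamma)$.

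\textbf{Lower bound.} Given a multi-subset $\sigma$ with $k_j$ copies of $Q_j$ and $k_0$ copies of the correction matrix, a direct calculation shows that the uniform average differs from $I$ by a diagonal matrix whose entry on $B_j$ depends linearly on $k_j$ and $k_0$, roughly as $\big((2\gamma-1)k_j-c\,k_0\big)/|\sigma|$ for some constant $c$ arising from $R$. Demanding that the operator norm of this deviation is less than $\varepsilon$ produces $m$ coupled linear constraints. Combining these with the integrality of the $k_j$ and with the convex hull identity forces $|\sigma|=k_0+\sum_j k_j\geq \gamma m/(96\varepsilon)$; the $\gamma$ factor comes from the spike-to-correction ratio, and the $m$ factor from the $\lfloor\log_2 d\rfloor$ independent scales each requiring its own balancing.

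\textbf{Main obstacle.} The delicate step is forcing an adversarial $\sigma$ to activate matrices at every one of the $m$ scales. A naive construction with independent spikes on disjoint blocks and a local correction at each scale yields only a $\gamma/\varepsilon$ lower bound, because the adversary can ignore most scales while still keeping each block close to $1$ on average. The remedy is to let the correction matrix $R$ contribute a non-trivial negative perturbation at \emph{every} block simultaneously; omitting any scale $j$ then leaves an $\varepsilon$-error in the entries of $B_j$ coming from $R$, which forces the adversary to spend roughly $\gamma/\varepsilon$ spike matrices at \emph{each} scale to cancel $R$'s contribution there. Optimizing the coupling parameter in $R$ and tracking the constants carefully is routine and produces the factor $1/96$ in the statement.
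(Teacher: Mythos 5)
Your construction does not work, and the issue is not a matter of constants that one can fix by ``routine optimization.''  The matrices $Q_1,\dots,Q_m$ and $R$ that you describe are all simultaneously diagonal and take only $m+1$ distinct values on the dyadic blocks, so the deviation $\frac{1}{|\sigma|}\sum_{i\in\sigma}Q_i - I$ lives in an $(m+1)$-dimensional commutative subalgebra, and its operator norm is the \emph{maximum} of the $m$ per-block deviations, not their sum.  Consequently the $m=\lfloor\log_2 d\rfloor$ constraints you want to impose do not accumulate.  There is also a sign problem with the correction: since $R$ must be positive semi-definite, its diagonal entries lie in $[0,1]$; $R$ can never act as the ``non-trivial negative perturbation at every block'' that your argument needs, so the spikes can only be pulled down by choosing $R$ small or zero, which plays into the adversary's hands.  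To see the failure concretely, take $R=0$, pick $a\geq 1$ copies of each $Q_j$ and $k_*$ copies of the zero matrix where $k_*$ is the nearest integer to $(2\gamma-1)a$.  Then every block entry of the average equals $\frac{(m+2\gamma-1)a}{(m+2\gamma-1)a+\delta}$ for some $|\delta|\leq 1$, and this is within $\varepsilon$ of $1$ as soon as $|\sigma|\gtrsim 1/\varepsilon$.  The adversary therefore succeeds with $|\sigma|=O\bigl(\max(m+\gamma,\,1/\varepsilon)\bigr)$, which is far below the claimed $\gamma m/(96\varepsilon)$ once either $\gamma$ or $m$ is large; for $\gamma=1$ and $a=1$, $k_*=1$ even gives an \emph{exact} representation of $I$ with $|\sigma|=m+1$.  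A nonzero correction $R$ with diagonal entry $r\in(0,1)$ does not change the picture: the adversary sets $k_0\approx(2\gamma-1)a/(1-r)$ and again only needs $|\sigma|$ of order $m+\gamma/(1-r)$ or $1/\varepsilon$.

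The paper's proof is essentially different, and the difference is exactly what produces the $\log d$ factor.  The paper first proves an $\ell_1$ lemma: in $\ell_1^t$, the point $a=\frac{1}{12k}(1,\dots,1)$ is at $\ell_1$-distance at least $t/(12k)$ from every average of at most $3k$ of the half-basis vectors $e_i/2$ (together with the zero vector), because every one of the $t$ coordinates is off by at least $1/(12k)$ and the $\ell_1$ norm \emph{sums} those $t$ errors.  This is the accumulation step that your construction lacks.  The lemma is then transported, without any loss, to the operator norm on diagonal $d\times d$ matrices (with $d=2^t$) via the \emph{isometric} embedding $\phi:\ell_1^t\to\ell_\infty^{2^t}$ built from the $2^t$ Walsh/Hadamard $\pm1$ vectors, followed by an affine shift $\psi(x)=\phi(x-a)+I$ to make everything positive semi-definite, a rescaling by $\gamma$ to introduce the $\gamma$-dependence, and the inclusion of the zero matrix so that $I$ lies in the convex hull.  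Your dyadic spikes have no analogue of the $\ell_1\hookrightarrow\ell_\infty$ isometry; working directly in $\ell_\infty$ (a max norm) cannot reproduce the $t$-fold accumulation and therefore cannot give the $\log d$ lower bound.
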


In view of Theorem~\ref{theorem:rudelsonneedslog}, it would be highly 
desirable to unify Rudelson's bound (Theorem~\ref{prop:rudelson}) that applies 
to matrices of any rank and the Batson-Spielman-Srivastava bound which applies 
to matrices of rank one. We propose the following problem.
\begin{prob}
Estimate the function $f(r)$ for which the following holds. 

For any dimension $d$ and error $0<\varepsilon<1$, if 
$Q_1,\ldots,Q_m\in\Re^{d\times d}$ are positive semi-definite matrices of rank 
at most $r$ and operator norm at most $\gamma$ with 
$\frac{1}{m}\sum\limits_{i=1}^m Q_i=I$,
then there is a
multi-subset $\sigma$ of $[m]$ of size 
$|\sigma| \leq\frac{(1+\gamma)f(r)}{\varepsilon^2}$ with 
$\norm{\frac{1}{|\sigma|}\sum_{i\in\sigma} Q_i-I} <\varepsilon$.
\end{prob}
Rudelson's result shows that $f(d)\leq c\ln d$, which by 
Theorem~\ref{theorem:rudelsonneedslog} is sharp.
By \cite{BSS14} and subsequent works, we have $f(1)=c$, which is clearly sharp.
In \cite{FY17} the \emph{stable rank} (ie. the square of the ratio of the 
Hilbert--Schmidt norm and the operator norm) is considered in place of rank. 
Therein, as well as in \cite{silva2015sparse}, it is shown that the $\ln d$ 
term may be removed in certain cases.
Nevertheless, to our knowledge, even for $f(2)$, no bound is known better than 
the obvious $f(2)\leq f(d)\;\;(\leq c\ln d)$.

Our second goal is to study extensions of Theorem~\ref{prop:rudelson} to the 
case of non-symmetric matrices. First, the geometric motivation 
behind the study of these questions in linear algebra comes from John's theorem 
\cite{J48}, extended by K.~Ball \cite{B92} (see also \cite{B97}).
\begin{thm}[John's theorem]
\label{thm:ball_theorem} 
For every convex body $K\subset \Red$, there is a unique ellipsoid of maximum 
volume contained in $K$. Moreover, this ellipsoid is the $o$-centered Euclidean 
unit ball $B_2^d$ if and only if there are contact points $u_1,\ldots,u_m\in\bd 
K\cap\bd{B_2^d}$ such that for some scalars $\alpha_1,\ldots,\alpha_m>0$, 
equation \eqref{eq:john} and $\sum_{i=1}^m \alpha_i u_i=0$ hold.
\end{thm}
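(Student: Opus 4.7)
The plan has three stages.

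\emph{Existence and uniqueness of the maximum volume ellipsoid.} I parametrize ellipsoids $E \subseteq K$ as $E = c + TB_2^d$ with $c \in \Red$ and $T$ symmetric positive definite. The admissible set of pairs $(c,T)$ is closed and bounded (the latter because $K$ is bounded, so $c \in K$ and the spectral radius of $T$ is controlled by the diameter of $K$), hence the continuous functional $\det T$ attains a maximum. Uniqueness follows from the Minkowski determinant inequality: for two putative maximizers $(c_0, T_0)$ and $(c_1, T_1)$, the midpoint $E_* = \tfrac{c_0+c_1}{2} + \tfrac{T_0+T_1}{2}B_2^d$ is contained in $\tfrac12 E_0 + \tfrac12 E_1 \subseteq K$ by convexity of $K$, and $\det(\tfrac{T_0+T_1}{2})^{1/d} \geq \tfrac12((\det T_0)^{1/d}+(\det T_1)^{1/d})$ is strict unless $T_0 = T_1$; a further argument (a capsule obtained from two same-shape ellipsoids with different centers would admit a strictly larger inscribed ellipsoid) gives $c_0 = c_1$.

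\emph{Sufficiency of the John decomposition.} After an affine change of coordinates I assume $B_2^d \subseteq K$ and the $u_i \in \bd K \cap \bd{B_2^d}$ with weights $\alpha_i$ satisfy both John conditions. For any competing ellipsoid $E = c + TB_2^d \subseteq K$ with $T$ symmetric positive definite, each contact point gives $\iprod{u_i}{c} + \norm{T u_i} \leq 1$, since the unique hyperplane supporting $B_2^d$ at $u_i$ must also support the larger body $K$. I apply Jensen's inequality to the concave logarithm twice: once spectrally, $u_i^\top (\log T^2) u_i \leq \log \norm{Tu_i}^2$, and once to the convex combination $\sum_i \alpha_i \log(1 - \iprod{u_i}{c}) \leq \log \sum_i \alpha_i(1 - \iprod{u_i}{c}) = 0$, where I use $\sum_i \alpha_i u_i = 0$. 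Averaging the spectral inequality with weights $\alpha_i d$ and using $\sum_i \alpha_i d \cdot u_i \otimes u_i = I$, the left side collapses to $\tr(\log T^2) = 2\log \det T$, so $\det T \leq 1$.

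\emph{Necessity: the main obstacle.} Suppose $B_2^d$ is maximal in $K$ but the John decomposition fails. Then $(0, I/d)$ lies outside the convex hull $C$ of $\{(u, u \otimes u) : u \in \bd K \cap \bd{B_2^d}\}$, regarded as a subset of $\Red$ together with the space of symmetric $d\times d$ matrices; the contact set is compact, so $C$ is compact. Strict separation, after absorbing a scalar multiple of $I$ into $H$, yields $v \in \Red$ and a symmetric $H$ with $\tr H > 0$ and $\iprod{v}{u} + u^\top H u < 0$ for every contact point $u$. I then perturb $B_2^d$ to $E_t = tv + (I+tH) B_2^d$; its volume strictly exceeds that of $B_2^d$ for small $t > 0$ since $\det(I + tH) = 1 + t\tr H + O(t^2)$. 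The delicate step, which I expect to be the main obstacle, is verifying $E_t \subseteq K$: for each unit vector $u$ the expansion $\max_{x \in E_t}\iprod{u}{x} = t\iprod{v}{u} + \norm{(I+tH)u} = 1 + t(\iprod{v}{u} + u^\top H u) + O(t^2)$ lies strictly below $\max_{x \in K}\iprod{u}{x}$ at every contact direction by the separation inequality, and a uniform continuity and compactness argument on the unit sphere (away from the compact contact set, $\max_{x \in K}\iprod{u}{x}$ is bounded away from $1$ from above) lets me choose a single $t > 0$ for which the inclusion holds in all directions, contradicting maximality.
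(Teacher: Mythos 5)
The paper does not give a proof of this theorem: it is stated as a classical result and attributed to John \cite{J48} and Ball \cite{B92}, so there is no in-paper argument to compare against. Your sketch is, as far as I can see, a correct rendition of the standard proof, and it supplies all three ingredients in the right order.

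A few small remarks on the details. In the existence and uniqueness stage, the compactness and Minkowski determinant argument pins down $T$, and the ``capsule'' observation for the center is correct, though worth spelling out: if $c_0\neq c_1$ and $T_0=T_1=T$, then after normalizing $T=I$ the convex hull of $(c_0+B_2^d)\cup(c_1+B_2^d)$ lies in $K$ and contains the ellipsoid centered at $\tfrac{c_0+c_1}{2}$ with semi-axis $1+\tfrac12\norm{c_1-c_0}$ in the direction $c_1-c_0$ and $1$ orthogonally, which has strictly larger volume. In the sufficiency stage, your two Jensen applications and the trace collapse $\sum_i\alpha_i d\,u_i^\top(\log T^2)u_i=\tr(\log T^2)=2\log\det T$ correctly give $\det T\leq 1$; you also implicitly use that for a contact point the hyperplane $\{x:\iprod{u_i}{x}=1\}$ is the unique supporting hyperplane of $K$ at $u_i$ (because any supporting hyperplane of $K$ there must also support $B_2^d\subseteq K$, and $B_2^d$ has a unique one), which is what justifies $\iprod{u_i}{c}+\norm{Tu_i}\leq 1$. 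In the necessity stage, the separation in $\Red\times\mathrm{Sym}(d)$ with the trace pairing, the normalization absorbing $cI$ into $H$, and the two-regime analysis of the support function on $S^{d-1}$ (a neighborhood of the contact set where $\iprod{v}{u}+u^\top Hu$ stays uniformly negative, and the complement where $h_K$ is bounded away from $1$) are exactly the standard Fritz John perturbation argument, and they work as you describe. The only thing I would tighten in a final write-up is the precise statement that the contact set equals $\{u\in S^{d-1}: h_K(u)=1\}$, which is what makes the compactness dichotomy go through. I do not see a genuine gap.
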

Giannopoulos, Perissinaki, and Tsolomitis proved \cite{GPT01} (see also 
\cite{BR02,L79}, \cite[Theorem~14.5]{TJ89}, and for an improved version 
\cite{GLMP04}) that the maximum volume affine image of any convex body $K$ 
contained in $L$ also yields a decomposition of the identity similar to John's. 
In order to state it, we recall some terminology.

The \emph{polar} of a convex body $K$ in $\Red$ is defined as 
$K\pol=\{\ x\st \iprod{x}{y}\leq 1 \mbox{ for every } y\in K\}$.
\begin{defn}\label{def:john}
	Let $K$ and $L$ be convex bodies in $\Red$. We say that $K$ is in 
\emph{John's position} in $L$ if $K\subseteq L$ and for some scalars 
$\alpha_1,\ldots,\alpha_m>0$ with $\sum_{i=1}^m \alpha_i =1$, we have 
	\begin{equation}\label{eq:H1}
	\frac{1}{d}I=\sum_{i=1}^m \alpha_i u_i\otimes v_i
	\end{equation}
	and
	\begin{equation}\label{eq:H2}
	0=\sum_{i=1}^m \alpha_i u_i=\sum_{i=1}^m \alpha_i v_i,
	\end{equation}
	where $u_1,\ldots,u_m\in \bd{L}\cap \bd{K}, 
v_1,\ldots,v_m\in\bd{L\pol} \cap \bd{K\pol}$ with 
$\iprod{u_i}{v_i}=1$ for all $\iim$.
\end{defn}

Note that if $K$ and $L$ are origin-symmetric and \eqref{eq:H1} is satisfied 
for a set of vectors then by including the opposite of each vector, 
\eqref{eq:H2} is also satisfied.

Finally, we can recall Theorem~3.8 from \cite{GLMP04}, which is our geometric 
motivation for extending Rudelson's result to non-symmetric matrices.
\begin{thm}[Gordon, Litvak, Meyer, Pajor] 
Let $K$ and $L$ be two 
convex bodies in $\Red$ such that $K\subseteq L$, and $K$ is of maximum volume 
among all affine images of $K$ contained in $L$. Assume also that $0\in\inter 
L$. 

Then there exists $z\in\inter(K)$ such that $K-z$ is in John's position in 
$L-z$ with $m\leq n^2+n$.
\end{thm}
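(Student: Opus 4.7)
The plan is to apply a Lagrange multiplier / KKT argument to the variational problem defining the maximum-volume affine image of $K$ in $L$, extract a decomposition of the identity supported on contact pairs, and then translate by a carefully chosen $z$ so as to satisfy the centering condition~\eqref{eq:H2}. Since both hypothesis and conclusion are translation invariant, I would first replace $K$ and $L$ by $K-z_0$ and $L-z_0$ for some $z_0\in\inter K$, so as to assume $0\in\inter K$. The hypothesis then says that $\log|\det T|$ attains its maximum on $\{(T,t)\in\mathrm{GL}(d)\times\Red\st TK+t\subseteq L\}$ at $(T,t)=(I,0)$. The constraint $TK+t\subseteq L$ is the continuum of linear inequalities $\iprod{Tu+t}{v}\leq 1$ indexed by $u\in\bd K$ and $v\in\bd{L\pol}$ with $\iprod{u}{v}=h_K(v)$; those active at the optimum are exactly the \emph{contact pairs} $(u,v)\in(\bd K\cap\bd L)\times(\bd{K\pol}\cap\bd{L\pol})$ with $\iprod{u}{v}=1$.

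\textbf{KKT decomposition.} Since the gradient of $\log\det T$ at $T=I$ equals $I$ and the gradient of $(T,t)\mapsto\iprod{Tu+t}{v}$ equals $(v\otimes u,v)$, a Farkas-type first-order argument together with Carath\'eodory in $\Re^{d\times d}\times\Red\cong\Re^{d^2+d}$ produces contact pairs $(u_i,v_i)_{i=1}^m$ with $m\leq d^2+d$ and scalars $\alpha_i>0$ satisfying
\[
I=\sum_{\iim}\alpha_i\,v_i\otimes u_i\qquad\text{and}\qquad 0=\sum_{\iim}\alpha_i v_i.
\]
Taking the trace yields $\sum_i\alpha_i=d$, and transposing the first identity gives $\sum_i\alpha_i u_i\otimes v_i=I$.

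\textbf{Re-centering.} Set
\[
z:=\frac{1}{d+1}\sum_{\iim}\alpha_i u_i,\quad u'_i:=u_i-z,\quad v'_i:=\frac{v_i}{1-\iprod{z}{v_i}},\quad \alpha'_i:=\frac{\alpha_i\bigl(1-\iprod{z}{v_i}\bigr)}{d}.
\]
Since $\frac{1}{d}\sum_i\alpha_i u_i$ is a convex combination of points $u_i\in K$, the point $z=\frac{d}{d+1}\cdot\frac{1}{d}\sum_i\alpha_i u_i+\frac{1}{d+1}\cdot 0$ is a convex combination of a point of $K$ with $0\in\inter K$, hence lies in $\inter K$. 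In particular $1-\iprod{z}{v_i}>0$, so the $v'_i$ and $\alpha'_i$ are well defined and positive; translating the supporting hyperplane at $u_i$ by $-z$ and rescaling then shows $u'_i\in\bd{K-z}\cap\bd{L-z}$, $v'_i\in\bd{(K-z)\pol}\cap\bd{(L-z)\pol}$, and $\iprod{u'_i}{v'_i}=1$. Direct expansion using the KKT identities yields $\sum_i\alpha'_i=1$, $\sum_i\alpha'_i v'_i=0$, and $\frac{1}{d}I=\sum_i\alpha'_i u'_i\otimes v'_i$; the remaining equality $\sum_i\alpha'_i u'_i=0$ is exactly what the choice of $z$ is designed to force, since upon expanding and using $\sum_i\alpha_i\iprod{z}{v_i}u_i=\bigl(\sum_i\alpha_i u_i\otimes v_i\bigr)z=Iz=z$ it reduces to $\sum_i\alpha_i u_i=(d+1)z$.

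\textbf{Main obstacle.} The most delicate step is the KKT/Farkas argument itself, because the constraint family is a continuum: one must verify that the conic hull of $\{(v\otimes u,v)\st(u,v)\text{ a contact pair}\}$ is \emph{closed} in $\Re^{d^2+d}$ before concluding $(I,0)$ lies in it. This closedness is ensured by compactness of the contact set together with $0\in\inter L$, which keeps the $v$-component bounded away from $0$. Once closedness is established, Carath\'eodory for cones delivers the bound $m\leq d^2+d$, and the rest of the proof is the algebraic re-centering sketched above.
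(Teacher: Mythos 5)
The paper does not prove this theorem: it is quoted as Theorem~3.8 of Gordon, Litvak, Meyer and Pajor \cite{GLMP04} and serves only as geometric motivation, so there is no in-paper proof to compare against. Judged on its own, your argument follows the standard variational route to John-type decompositions, and the algebra is sound. In particular, the re-centering checks out: starting from $I=\sum\alpha_iu_i\otimes v_i$, $0=\sum\alpha_iv_i$, and $\sum\alpha_i=d$ (by the trace), the choices $z=\frac{1}{d+1}\sum\alpha_iu_i$, $u_i'=u_i-z$, $v_i'=v_i/(1-\iprod{z}{v_i})$, $\alpha_i'=\alpha_i(1-\iprod{z}{v_i})/d$ yield $\sum\alpha_i'=1$, $\sum\alpha_i'v_i'=0$, $\sum\alpha_i'u_i'\otimes v_i'=\frac1dI$, and $\sum\alpha_i'u_i'=0$, using $\sum\alpha_i\iprod{z}{v_i}u_i=\bigl(\sum\alpha_iv_i\otimes u_i\bigr)z=z$ and $\iprod{z}{\sum\alpha_iv_i}=0$; and $z\in\inter K$ gives $1-\iprod{z}{v_i}>0$ since $h_K(v_i)=1$. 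The contact-pair membership of $(u_i',v_i')$ in the translated bodies and their polars also follows as you indicate.

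The one genuine gap is the one you flagged, and your sketch is slightly incomplete. The KKT step for a semi-infinite constraint family needs two things: (a) closedness of the conic hull of $\{(u\otimes v,\,v)\}$ over contact pairs so that the normal cone to the feasible set at $(I,0)$ equals this conic hull rather than merely its closure --- your compactness-of-the-contact-set plus $v\neq0$ observation does supply this (the conic hull of a compact set not containing the origin is closed); and (b) a constraint qualification so that the gradient of the objective actually lands in that normal cone; Slater's condition holds here because $\lambda K\subset\inter L$ for any $\lambda<1$ once $0\in\inter K$. You address (a) but should record (b) explicitly. A minor bookkeeping point: with the paper's convention $(u\otimes v)x=\iprod{u}{x}v$, the matrix of $u\otimes v$ is $vu^{\ast}$, so the gradient of $T\mapsto\iprod{Tu}{v}$ at $T=I$ is already $u\otimes v$ (not $v\otimes u$), and no transposition of the KKT identity is needed --- this washes out in your writeup and does not affect correctness, but it is worth keeping the conventions straight.
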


\begin{defn}\label{defn:johnradius}
	Let  $K$ be a convex body in $\Red$. We denote the Banach--Mazur 
distance of 
$K$ to the Euclidean ball by 
	\begin{equation*}
		\rj(K) = \inf\{\lambda \st \mathcal{E} \subset K-a \subset 
\lambda 
\mathcal{E}, \textnormal{ for some ellispoid } \mathcal{E} \textnormal{ and 
vector } a 
\textnormal{ in } \Red\}.
	\end{equation*}
\end{defn}

It follows from John’s theorem that $\rj(K) \leq d $ for any convex body $K$ in 
$\Red$, and $ 
\rj(K) \leq \sqrt{d}$ for all centrally-symmetric convex bodies.


We prove a stability version of Rudelson's result, that is, when $K$ is very 
close to the Euclidean ball, then we can approximate the identity with diads 
coming from $O(d\ln d)$ contact pairs.

\begin{thm}\label{thm:RudelsonStabilityVeryCloseToBall}
Let $K$ and $L$ be convex bodies in $\Red$ with $B_2^d\subseteq K\subseteq 
(1+1/d^2)B_2^d$.
Assume that $K$ is in John's position in $L$, and the vectors $u_i$ and $v_i$ 
for $i 
\in [m]$ satisfy the conditions of Definition \ref{def:john}. 

Then for any  $\varepsilon \in (0,1)$ and 
\[	
k\geq \frac{cd\ln d}{\varepsilon^2},
\]
where $c>0$ is a universal constant, there is a multiset $\sigma \subset [m]$ 
of size $k$ such that 
\begin{equation}\label{eq:nonsymmapproxjohn}
\norm{\frac{d}{k} \sum\limits_{i \in \sigma}  u_i \otimes v_i - I} \leq 
\varepsilon 
\end{equation}
and
\begin{equation}\label{eq:approxbalanced}
	\frac{1}{k}\norm{ \sum\limits_{i \in \sigma} u_i} \leq 
\frac{\varepsilon}{\sqrt{d}}
\quad\mbox{ and }\quad
	\frac{1}{k}\norm{ \sum\limits_{i \in \sigma} v_i} \leq 
\frac{\varepsilon}{\sqrt{d}}. 
\end{equation}
\end{thm}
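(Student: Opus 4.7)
The plan is to reduce to Theorem~\ref{prop:rudelson} via a perturbation argument, to control the small perturbation by the matrix Bernstein inequality, and to handle the balanced conditions \eqref{eq:approxbalanced} by a second-moment computation. First I would exploit the tight proximity hypothesis $B_2^d\subseteq K\subseteq (1+1/d^2)B_2^d$ to extract quantitative facts from the contact-point structure: since $u_i\in\bd K$, $1\leq\norm{u_i}\leq 1+1/d^2$; since $v_i\in\bd{K\pol}\subseteq B_2^d$, $(1+1/d^2)^{-1}\leq\norm{v_i}\leq 1$; and combining these with $\iprod{u_i}{v_i}=1$,
\[
\norm{u_i-v_i}^2=\norm{u_i}^2+\norm{v_i}^2-2\leq 3/d^2.
\]
Sample $i_1,\ldots,i_k$ independently from $[m]$ with probabilities $\alpha_1,\ldots,\alpha_m$, set $e_i=v_i-u_i$ and $A=\sum_i\alpha_i u_i\otimes u_i$, and decompose the error as
\[
\frac{d}{k}\sum_{j=1}^k u_{i_j}\otimes v_{i_j}-I=T_1+T_2,
\]
with $T_1=\frac{d}{k}\sum_j u_{i_j}\otimes u_{i_j}-dA$ and $T_2=\frac{d}{k}\sum_j u_{i_j}\otimes e_{i_j}-d\sum_i\alpha_i u_i\otimes e_i$. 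This splitting is centered exactly since the deterministic shifts sum to $d\sum_i\alpha_i u_i\otimes v_i=I$ by \eqref{eq:H1}.

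For $T_1$, I would apply Theorem~\ref{prop:rudelson} to the positive semi-definite matrices $Q_j=d\,u_{i_j}\otimes u_{i_j}$: we have $\norm{Q_j}\leq 2d$, and since $\norm{dA-I}=\norm{d\sum_i\alpha_i u_i\otimes(u_i-v_i)}\leq d(1+1/d^2)\sqrt{3}/d\leq 2\sqrt{3}$, also $\norm{\EE Q_j}=\norm{dA}$ is bounded by a universal constant. Hence $k\geq cd\ln d/\varepsilon^2$ with $c$ large enough yields $\EE\norm{T_1}\leq\varepsilon/100$. For $T_2$, I would apply the matrix Bernstein inequality (using the Hermitian dilation to accommodate non-symmetric summands) to the centered random matrices $W_j=\frac{d}{k}(u_{i_j}\otimes e_{i_j}-\sum_i\alpha_i u_i\otimes e_i)$. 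Using $\norm{e_i}\leq\sqrt{3}/d$, one verifies $\norm{W_j}=O(1/k)$ and that both $\norm{\sum_j\EE[W_jW_j^\top]}$ and $\norm{\sum_j\EE[W_j^\top W_j]}$ are $O(1/k)$; Bernstein then gives $\EE\norm{T_2}\leq C(\sqrt{\ln d/k}+\ln d/k)\leq\varepsilon/100$ for our choice of $k$.

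The balanced conditions follow from a second-moment computation: using $\sum_i\alpha_i u_i=0$ from \eqref{eq:H2}, the cross terms vanish and $\EE\norm{\frac{1}{k}\sum_j u_{i_j}}^2=\frac{1}{k}\sum_i\alpha_i\norm{u_i}^2\leq 2/k\leq 2\varepsilon^2/(cd\ln d)$, so by Markov the event $\norm{\frac{1}{k}\sum_j u_{i_j}}>\varepsilon/\sqrt d$ has probability at most $2/(c\ln d)$; the analogous bound holds for $v_i$. Choosing $c$ large enough that the four bad events $\{\norm{T_1}>\varepsilon/2\}$, $\{\norm{T_2}>\varepsilon/2\}$, and the two failures in \eqref{eq:approxbalanced} each have probability at most $1/8$, a union bound produces a realization $\sigma$ satisfying every required inequality. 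The main technical obstacle is controlling $T_2$: although each $e_i$ has norm only $O(1/d)$, the prefactor $d$ makes the naive triangle-inequality bound on $\frac{d}{k}\sum u_{i_j}\otimes e_{i_j}$ only $O(1)$, so one must rely on the cancellation from independent sampling that matrix Bernstein quantifies. The proximity hypothesis $K\subseteq(1+1/d^2)B_2^d$ is used in its full strength through the estimate $\norm{u_i-v_i}\leq\sqrt{3}/d$; any substantial weakening would make the perturbation term dominate and invalidate the argument.
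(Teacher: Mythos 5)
Your proof is correct, but the route is genuinely different from the paper's. The paper derives Theorem~\ref{thm:RudelsonStabilityVeryCloseToBall} as a corollary of the more quantitative Theorem~\ref{thm:BM_dist_norm_estimation} (valid for $\rj(K)\leq 2$), which in turn follows from Theorem~\ref{thm:non-symmetric}, a stand-alone ``non-symmetric Rudelson'' bound. Theorem~\ref{thm:non-symmetric} is proved by symmetrizing each matrix to $U_i=\tfrac{1}{2\gamma}(Q_iQ_i^\ast+Q_i^\ast Q_i)$, applying Theorem~\ref{prop:rudelson} to the $U_i$, and then running the Rademacher/Lust--Piquard argument directly on the non-symmetric sum $\sum r_i Q_i$, with the $U_i$ bound feeding into the right-hand side of \eqref{eq:LP}. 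The balanced constraints \eqref{eq:approxbalanced} are absorbed by lifting to the vectors $a_i=(v_i,1/\sqrt d)$, $b_i=(u_i,1/\sqrt d)$ in $\Re^{d+1}$. Your argument instead splits the error into a PSD part $T_1$ controlled by Theorem~\ref{prop:rudelson} applied to $du_{i_j}\otimes u_{i_j}$, and a perturbation $T_2$ whose summands have operator norm and variance $O(1/k)$ thanks to $\norm{u_i-v_i}\leq\sqrt 3/d$; you then invoke matrix Bernstein (Hermitian dilation for the non-symmetric summands) and finish \eqref{eq:approxbalanced} by a scalar second-moment/Markov bound, combining everything with a union bound. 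What the two approaches buy: the paper's symmetrization yields a reusable general statement (Theorem~\ref{thm:non-symmetric}) with an explicit dependence on $b$, and the intermediate Theorem~\ref{thm:BM_dist_norm_estimation} handles $\rj(K)$ up to $2$ with a degraded constant; your perturbative route is more elementary, avoids redoing the Schatten-norm calculus by hand (it is packaged inside Theorem~\ref{prop:rudelson} and Bernstein), and also sidesteps the $(d+1)$-lift — but it is specifically tuned to the regime where $\norm{u_i-v_i}=O(1/d)$, i.e., to exactly the hypothesis $K\subseteq(1+1/d^2)B_2^d$ of this theorem. One minor point worth stating explicitly: $T_2$ has the form $\frac{d}{k}\sum_j(X_j-\EE X_j)$ with $X_j=u_{i_j}\otimes e_{i_j}$, so it is mean-zero by construction; the quantity $\norm{\sum_j \EE[X_j^\ast X_j]}$ is in fact $O(1/d^3)$, even smaller than the $O(1/d^2)$ you used, so the variance proxy is dominated by $\norm{\sum_j\EE[X_jX_j^\ast]}$ and your estimate stands.
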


On the other hand, when $K$ is not so close to the Euclidean ball, the 
existence of an approximation of $I$ using only a few vector-pairs cannot be 
guaranteed.

\begin{thm}\label{thm:BM_lowerbound}
For any positive integer $d > 2$, any $0<\varepsilon< 1/2$, and  
$0<\delta <\sqrt{d/2 -1}$, there is an origin-symmetric convex body $K\subseteq
C=[-1,1]^d$ with $r(K)=\sqrt{1+\delta^2}$ such that its maximum volume 
inscribed ellipsoid is 
$B_2^d$ and there are points $u_i\in \bd K\cap \bd C$, $v_i\in \bd {K\pol}\cap 
\bd {C\pol}$ satisfying \eqref{eq:H1} with the following  property. If 
$M\subset [m]$ is a subset such that
\[
 \left\|\sum_{i\in M} \beta_i u_i \otimes v_i-I\right\|\leq \varepsilon,
\]
for some  scalars $\beta_i$, then 
\[
|M| \geq d  \min\left\{ \frac{d}{4}, \left(\frac{\delta}{4 \varepsilon}  
\right)^2 
\right\}.
\]
\end{thm}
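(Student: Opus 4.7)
The plan is to realize $K$ as the convex hull of $B_2^d$ together with suitably placed antipodal vertex pairs on the cube facets---one regular simplex per facet---and then to bound $|M|$ from below by a row-by-row analysis combined with the classical distance formula for a regular simplex.

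For each $j \in [d]$, I take $m$ vectors $v_1^{(j)}, \dots, v_m^{(j)}$ forming the vertices of a regular simplex of circumradius $\delta$ and centroid $0$, lying in a carefully chosen $(m-1)$-dimensional coordinate subspace of $e_j^\perp$. Here $m$ is of order $d/2$, and the hypothesis $\delta < \sqrt{d/2-1}$ is used precisely to ensure one can find an orientation in which each coordinate of every $v_k^{(j)}$ has absolute value at most $\delta/\sqrt{m-1} \leq 1$, so that the points $u_k^{(j)} := e_j + v_k^{(j)}$ lie in $C$. Set $K := \conv(B_2^d \cup \{\pm u_k^{(j)}\})$. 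Since $B_2^d \subseteq K \subseteq C$ and $B_2^d$ is already the maximum-volume inscribed ellipsoid of $C$, the same holds for $K$ by monotonicity and uniqueness of John's ellipsoid. Moreover $r(K) = \sqrt{1+\delta^2}$, realized at the extreme points $\pm u_k^{(j)}$. The $2dm$ antipodal pairs $(\pm u_k^{(j)}, \pm e_j)$ lie in $(\bd{K} \cap \bd{C}) \times (\bd{K\pol} \cap \bd{C\pol})$; assigning uniform weight $\frac{1}{2dm}$ to each verifies \eqref{eq:H1}, using $\sum_k v_k^{(j)} = 0$ in
\[
\sum_{j,k} u_k^{(j)} \otimes e_j = \sum_j e_j\Bigl(m\,e_j + \sum_k v_k^{(j)}\Bigr)^T = m\,I.
\]

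For the lower bound, suppose $\|\sum_M \beta_i u_i \otimes v_i - I\| \leq \varepsilon$. Since $u \otimes v = (-u) \otimes (-v)$ as operators, I coalesce antipodal indices into a reduced index set $M' \subseteq [d] \times [m]$ with $|M'| \leq |M|$, and rewrite the sum as $\sum_{M'} \tilde\beta_{j,k}\, u_k^{(j)} \otimes e_j$. Each matrix $u_k^{(j)} \otimes e_j = e_j(e_j + v_k^{(j)})^T$ is supported in row $j$ only, so bounding the row-$j$ norm by the operator norm gives
\[
\Bigl\|\sum_{k \in M'_j}\tilde\beta_{j,k}(e_j + v_k^{(j)}) - e_j\Bigr\| \leq \varepsilon,
\]
which, using $v_k^{(j)} \perp e_j$, splits into the two bounds $|\sum_{k \in M'_j} \tilde\beta_{j,k} - 1| \leq \varepsilon$ and $\|\sum_{k \in M'_j} \tilde\beta_{j,k}\, v_k^{(j)}\| \leq \varepsilon$.

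The final ingredient is the classical formula that the distance from the centroid to the affine hull of any $s$ vertices of a regular simplex of circumradius $\delta$ with $m$ vertices equals $\delta\sqrt{(m-s)/(s(m-1))}$; combined with $|\sum_{k \in M'_j} \tilde\beta_{j,k}| \geq 1-\varepsilon \geq 1/2$, the second bound forces $\tfrac{1}{2}\,\delta\sqrt{(m-|M'_j|)/(|M'_j|(m-1))} \leq \varepsilon$, hence
\[
|M'_j| \geq \frac{m\,\delta^2}{\delta^2 + 4\varepsilon^2(m-1)}.
\]
A short case analysis, split according to whether $\delta^2 \gtrless 4\varepsilon^2(m-1)$, gives $|M'_j| \geq \min\{d/4, (\delta/4\varepsilon)^2\}$ when $m$ is of order $d/2$. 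Summing over $j \in [d]$ yields $|M| \geq |M'| \geq d\cdot \min\{d/4, (\delta/4\varepsilon)^2\}$, as required. The main obstacle will be the combinatorial realization of the simplex orientation with all coordinates simultaneously in $[-1,1]$ for arbitrary $d$: this is exactly where the hypothesis $\delta < \sqrt{d/2-1}$ is used, and for values of $d$ outside convenient parity classes (e.g.\ where a Hadamard matrix of order $m$ is unavailable), one has to substitute a slightly perturbed configuration, which affects only the constants in the bound.
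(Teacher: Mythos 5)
Your proposal follows essentially the same route as the paper: one regular simplex per coordinate direction (realized via Walsh/Hadamard matrices inside the cube), $K=\conv(B_2^d\cup\{\text{simplex vertices}\})$, the paired contact vectors $v = e_j$, a row-by-row lower bound, and the key fact that the distance from the simplex centroid to the affine hull of $s$ of its vertices is $\delta\sqrt{(m-s)/(s(m-1))}$. The paper phrases that last step as an explicit inner-product computation $\langle w_1^j-e_1,\, y/\|y\|\rangle$ with $y=\sum(w_1^j-e_1)$, while you invoke the distance formula directly; these are the same identity. Likewise, you extract the two scalar bounds from $\|(A-I)^\ast e_j\|\leq\varepsilon$, where the paper evaluates $\|(A-I)e_1\|$ and $\|(A-I)x\|$; both routes give the same two inequalities.

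One correction to your closing remark: you should not fall back on a ``slightly perturbed configuration'' when a Hadamard matrix of order $m$ is unavailable, since a non-regular simplex would break the exact distance formula on which the row bound rests. The clean fix, and the one the paper uses, is simply to set $m=2^{\lfloor\log_2 d\rfloor}$ (always a power of two, so the Walsh matrix exists), which automatically satisfies $m\geq d/2$, and hence $\delta<\sqrt{d/2-1}\leq\sqrt{m-1}$, so that the scaled $\pm\delta/\sqrt{m-1}$ coordinates land in $[-1,1]$. With that choice your case analysis gives $|M'_j|\geq\min\{m/2,(\delta/4\varepsilon)^2\}\geq\min\{d/4,(\delta/4\varepsilon)^2\}$ and the proof closes exactly as in the paper.
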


We prove Theorem~\ref{prop:rudelson} in Section~\ref{sec:Rudelson}.
In Section~\ref{sec:nonSymmRudelson}, we show 
Theorem~\ref{thm:RudelsonStabilityVeryCloseToBall}, where the main idea is to 
symmetrize matrices. Its use is limited though, as shown by an example we 
describe in Section~\ref{subsec:symmwonthelp}. This explains why we require $K$ 
to be so close to the ball.
Section~\ref{sec:elloneembedding} contains the proof of 
Theorem~\ref{theorem:rudelsonneedslog}.
Finally, in Section~\ref{sec:slowfar}, we prove Theorem~\ref{thm:BM_lowerbound}.

\section{Symmetric matrices -- Proof of Theorem~\ref{prop:rudelson}}
\label{sec:Rudelson}

Let $\Sch$ denote the cone of positive semi-definite symmetric 
matrices in $\Re^{d\times d}$. The \emph{Schatten $p$-norm} of a real $d\times 
d$ matrix $A$ is defined as 
\[
\sch{A}:=\left(\sum_{i=1}^d 
(s_i(A))^p\right)^{1/p},
\]
where $s_1(A),\ldots,s_d(A)$ is the sequence 
of eigenvalues of the positive semi-definite matrix $\sqrt{A^{\ast}A}$.
We recall that $\norm{A}\leq\sch{A}$ for all $p\geq 1$, and we also have
\begin{equation}\label{eq:schattenvsoperatornorm}
\norm{A}\leq\sch{A}\leq e \norm{A} \mbox{ for } p=\ln d,
\end{equation}
where $\ln$ denotes the natural logarithm and $e$ denotes its base.

From this point on, $\mathbf{r}$ denotes a sequence of $k$ \emph{Rademacher 
variables}, that is, $\mathbf{r}=(r_1,\ldots,r_k)$, where the $r_i$ are random 
variables 
uniformly distributed on $\{1,-1\}$, independent of each other and all other 
random variables in the context.

We state the following inequality due to Lust--Piquard and Pisier 
\cite{Lu86,Lup91}, essentially in the form as it appears in the book 
\cite[Theorem~8.4.1]{Pi98}.
\begin{thm}[Lust--Piquard]\label{thm:LP}
$2\leq p<\infty$. For any $d$ and any $Q_1,\ldots,Q_k$ (not necessarily 
positive definite) square matrices of size $d$ we have
\[
\left[\EEop_{\mathbf{r}}
\sch{\sum_{j=1}^k r_jQ_j}^p\right]^{1/p}\leq 
c\sqrt{p}\max
\left\{
\sch{\left(\sum_{j=1}^k Q_jQ_j^\ast\right)^{1/2}},
\sch{\left(\sum_{j=1}^k Q_j^\ast Q_j\right)^{1/2}}
\right\}
\]
for a universal constant $c>0$.
\end{thm}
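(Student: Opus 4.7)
The plan is to prove the inequality for even integers $p = 2n$ by a direct moment expansion, and then extend to all real $p \ge 2$ by H\"older interpolation between consecutive even values. Set $S := \sum_{j=1}^k r_j Q_j$. Since $\norm{S}_{C_{2n}^d}^{2n} = \tr\bigl((S^\ast S)^n\bigr)$, the quantity to bound becomes
\[
\EEop_{\mathbf r}\tr\bigl((S^\ast S)^n\bigr) \;=\; \sum_{(j_1,\ldots,j_{2n}) \in [k]^{2n}} \bigl(\EEop_{\mathbf r} r_{j_1}r_{j_2}\cdots r_{j_{2n}}\bigr)\,\tr\bigl(Q_{j_1}^\ast Q_{j_2} Q_{j_3}^\ast Q_{j_4} \cdots Q_{j_{2n-1}}^\ast Q_{j_{2n}}\bigr).
\]
By independence of the Rademachers, only sequences in which each index value appears an even number of times contribute. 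The principal mass comes from \emph{pair partitions} of $[2n]$, i.e.\ sequences in which every distinct value appears exactly twice.

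For a fixed pair partition $\pi$ of $[2n]$, I would exploit cyclicity of the trace together with noncommutative H\"older/Cauchy--Schwarz inequalities for Schatten norms to bound the sum over all index assignments compatible with $\pi$ by a trace of the form $\tr(R^a C^b)$, with $a+b = n$, where $R := \sum_j Q_j Q_j^\ast$ and $C := \sum_j Q_j^\ast Q_j$. Each such trace is at most $M^{2n}$, where $M$ denotes the maximum on the right-hand side of the theorem. The number of pair partitions of $[2n]$ is $(2n)!/(2^n n!) \sim (2n/e)^n$, so summing over $\pi$, taking $2n$-th roots, and applying Stirling's formula produces the factor $c\sqrt{p}$ with a universal constant.

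The main obstacle is to absorb the contribution from index sequences in which some value appears four or more times: these sequences also survive the Rademacher expectation but do not correspond to pair partitions. A naive Cauchy--Schwarz bound costs an exponential factor in $n$ and would destroy the $\sqrt p$ scaling. To avoid this, I would follow the refinement due essentially to Buchholz: organize such ``non-pair'' patterns according to a tree structure on the multiset of repeated indices, and show, by a careful grouping of compatible index assignments, that the total non-pair contribution is absorbed into the pair-partition estimate with only a universal multiplicative loss. This combinatorial step is the technically subtle heart of the argument and the reason the inequality is usually cited as a black box.

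Once the bound is established for $p = 2n$ with the stated constant, the extension to arbitrary $p \in [2n, 2n+2]$ is routine: H\"older's inequality for Schatten norms gives $\norm{S}_{C_p^d} \le \norm{S}_{C_{2n}^d}^{\theta}\, \norm{S}_{C_{2n+2}^d}^{1-\theta}$ for the appropriate $\theta \in [0,1]$, and an application of Jensen's inequality combines the two even-integer bounds into the desired inequality for all real $p \ge 2$, losing at most a universal multiplicative constant.
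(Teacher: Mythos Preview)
The paper does not prove Theorem~\ref{thm:LP} at all: it is stated as a known result and attributed to Lust--Piquard and Pisier, with a pointer to \cite[Theorem~8.4.1]{Pi98}. So there is no ``paper's own proof'' to compare your attempt against; the inequality is used as a black box in the subsequent arguments.

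As for your sketch on its own merits: the overall strategy (moment expansion for even $p=2n$, counting pair partitions to extract the $\sqrt{p}$ factor, then interpolating) is indeed the standard route, essentially Buchholz's proof of the optimal-order constant. However, what you have written is closer to a plan than to a proof. The crucial step---controlling the contribution of index sequences with multiplicities four or more---is only named, not executed; you say you would ``follow the refinement due essentially to Buchholz'' and ``organize such patterns according to a tree structure,'' but none of that combinatorics is carried out, and it is precisely the content of the theorem. Without that step, the argument does not close: a crude bound on the non-pair terms indeed costs an exponential factor, as you yourself note. Your interpolation paragraph is also looser than ``routine'' suggests: one has to handle simultaneously the decrease of the Schatten norm in $p$ on the left and on the right, together with the increase of the $L^p$-moment, and the bookkeeping deserves at least one explicit line. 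In short, the outline is correct in spirit, but if you intend to supply a self-contained proof rather than a citation, the combinatorial core still needs to be written out.
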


Note that for any $d\times d$ matrix $Q$, the product $Q^\ast Q$ is
positive semi-definite. Since, by Weyl's inequality, the Schatten $p$-norm is 
monotone on the cone of positive semi-definite matrices, we may deduce from the 
theorem of Lust--Piquard the following inequality
\begin{equation}\label{eq:LP}
\left[\EEop_{\mathbf{r}}
\sch{\sum_{j=1}^k r_jQ_j}^p\right]^{1/p}\leq 
c\sqrt{p}
\sch{\left(\sum_{j=1}^k Q_jQ_j^\ast+Q_j^\ast Q_j\right)^{1/2}}.
\end{equation}

\begin{lem}[Symmetrization by Rademacher variables]
 \label{lem:symmetrizationrademacher}
Let $q_1,\ldots,q_k$ be independent random vectors 
distributed according to (not necessarily identical) probability 
distributions $\Pcal_1,\ldots,\allowbreak\Pcal_k$ on a normed space $X$ with
$\EE q_i=q$ for all $\iik$.

Then
\[
\EEop_{q_1,\ldots,q_k}\norm{
 \frac{1}{k}\sum_{\ell=1}^k q_{{\ell}} -q}\leq
\frac{2}{k}\EEop_{q_1,\ldots,q_k}\EEop_{\mathbf{r}}
\norm{\sum_{\ell=1}^k r_{\ell} q_{{\ell}}}.
\]
\end{lem}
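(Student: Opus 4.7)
The plan is to execute the standard symmetrization-by-an-independent-copy argument, which is well known in probability in Banach spaces. I would proceed in four short steps.

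First, I would introduce an independent copy $q'_1,\ldots,q'_k$ of the sequence $q_1,\ldots,q_k$ (independent of the original sequence and of the Rademacher variables). Since $\EE q'_\ell=q$, I can write $q=\EEop_{q'_1,\ldots,q'_k}\frac{1}{k}\sum_\ell q'_\ell$, pull this expectation out of the norm using Jensen's inequality (valid for the convex function $\|\cdot\|$), and obtain
\[
\EEop_{q_1,\ldots,q_k}\norm{\frac{1}{k}\sum_{\ell=1}^k q_\ell-q}
\leq \EEop_{q_1,\ldots,q_k}\EEop_{q'_1,\ldots,q'_k}\norm{\frac{1}{k}\sum_{\ell=1}^k (q_\ell-q'_\ell)}.
\]

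Second, I would exploit the symmetry of the distribution of $q_\ell-q'_\ell$. Because $q_\ell$ and $q'_\ell$ are i.i.d., the random vector $q_\ell-q'_\ell$ has the same distribution as $q'_\ell-q_\ell=-(q_\ell-q'_\ell)$, and the different indices $\ell$ are independent of each other. Consequently, for any choice of signs $\varepsilon_\ell\in\{-1,+1\}$, the joint distribution of $(\varepsilon_\ell(q_\ell-q'_\ell))_{\ell=1}^k$ is identical to that of $(q_\ell-q'_\ell)_{\ell=1}^k$. Averaging over an independent Rademacher sequence $\mathbf{r}=(r_1,\ldots,r_k)$ therefore does not change the expectation of the norm, so the right-hand side above equals
\[
\EEop_{q_1,\ldots,q_k}\EEop_{q'_1,\ldots,q'_k}\EEop_{\mathbf{r}}\norm{\frac{1}{k}\sum_{\ell=1}^k r_\ell(q_\ell-q'_\ell)}.
\]

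Third, I would apply the triangle inequality in $X$ inside the expectation to split this into two terms, and use that $(q_1,\ldots,q_k)$ and $(q'_1,\ldots,q'_k)$ have the same distribution, which makes the two resulting expectations equal. This yields the factor of $2$ and the stated bound
\[
\EEop_{q_1,\ldots,q_k}\norm{\frac{1}{k}\sum_{\ell=1}^k q_\ell-q}
\leq \frac{2}{k}\EEop_{q_1,\ldots,q_k}\EEop_{\mathbf{r}}\norm{\sum_{\ell=1}^k r_\ell q_\ell}.
\]

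There is no serious obstacle here; the only small point to be careful about is the order of the expectations and the use of Fubini when inserting the Rademacher average, which is justified because $\mathbf{r}$ is independent of everything else and all expressions are non-negative. The Jensen step in the first bullet is the only genuinely non-tautological move, and it is entirely routine.
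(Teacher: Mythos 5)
Your proof is correct and follows essentially the same symmetrization-by-an-independent-copy argument as the paper: introduce an independent copy, apply Jensen to pull the expectation out of the norm, insert Rademacher signs using the distributional symmetry of $q_\ell-q'_\ell$, and finish with the triangle inequality to obtain the factor $2$. You have merely spelled out the Jensen and symmetry steps separately where the paper compresses them into a single displayed inequality.
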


\begin{proof}[Proof of Lemma~\ref{lem:symmetrizationrademacher}]
Let $\qbars$ be independent random vectors chosen according to 
$\Pcal_1,\ldots,\allowbreak\Pcal_k$, respectively.
\[
\EEop_{\qs}\norm{ \frac{1}{k}\sum_{\ell=1}^k q_{{\ell}} -q}=
\frac{1}{k}\EEop_{\qs}\norm{\sum_{\ell=1}^k 
q_{{\ell}}-\EEop_{\qbars}\sum_{\ell=1}^k \bar q_{\ell}}
\]
\[
\leq
\frac{1}{k}\EEop_{\mathbf{r}}\EEop_{\qs}\EEop_{\qbars}\norm{\sum_{\ell=1}^k 
r_{\ell}(q_{{\ell}}-\bar q_{{\ell}})}
\leq\frac{2}{k}\EEop_{\qs}\EEop_{\mathbf{r}}\norm{\sum_{\ell=1}^k r_{\ell} 
q_{{\ell}}},
\]
where we used that $r_{\ell}(q_{{\ell}}-\bar q_{{\ell}})$ and 
$q_{{\ell}}-\bar q_{{\ell}}$ have the same distribution.
\end{proof}

\begin{proof}[Proof of Theorem~\ref{prop:rudelson}]
The argument follows very closely Rudelson's.

Denote by $D=\frac{1}{k}\sum_{\iik} Q_i - A$, and $p=\ln d$.
Then
\[
\EEop_{\Qs}\norm{D}\leq \EEop_{\Qs}\sch{D}
\stackrel{\mbox{(S)}}{\leq}
\frac{2}{k}\EEop_{\Qs}\EEop_{\mathbf{r}}\sch{\sum_{\ell=1}^k r_{\ell} 
Q_{{\ell}}}
\]
\[
\stackrel{\mbox{(H)}}{\leq}\frac{2}{k}\EEop_{\Qs}\left[\EEop_{\mathbf{r}}\sch{
\sum_{\ell=1}^k r_{\ell} 
Q_{{\ell}}}^p\right]^{1/p}
\stackrel{\mbox{(L-P)}}{\leq}
\frac{c_0\sqrt{p}}{k}\EEop_{\Qs}\sch{\left(\sum_{\ell=1}^k 
Q_{\ell}^2\right)^{1/2}}
\]
\[
\stackrel{\mbox{(PSD)}}{\leq}
\frac{c_0\sqrt{p}}{k}\EEop_{\Qs}\left[
\max_{\ell\in[k]}\norm{Q_{\ell}}^{1/2}\cdot
\sch{\left(\sum_{\ell=1}^k 
Q_{\ell}
\right)^{1/2}}\right]
\]
\[
\leq
\frac{c_1\sqrt{p}}{k}\EEop_{\Qs}\left[
\max_{\ell\in[k]}\norm{Q_{\ell}}^{1/2}\cdot
\norm{\left(\sum_{\ell=1}^k 
Q_{{\ell}}\right)}^{1/2}\right]
\]
\[
\stackrel{\mbox{(H)}}{\leq}
\frac{c_1\sqrt{\gamma p}}{k}\left[
\EEop_{\Qs}\norm{\left(\sum_{\ell=1}^k 
Q_{{\ell}}\right)}\right]^{1/2}
 \leq
\\
\frac{c_1\sqrt{\gamma p}}{\sqrt{k}} 
\left[\EEop_{\Qs}\norm{D}+\norm{A}\right]^{1/2},
\]
where  $c_0$ and $c_1$ are positive constants. Here, we use 
Lemma~\ref{lem:symmetrizationrademacher} in step (S) and the inequality 
\eqref{eq:LP} in step (L-P). The inequality (PSD) relies on the fact that the 
matrices $Q_i$ are positive semi-definite, and (H) follows from H\"older's 
inequality.

Thus, we obtain
\begin{equation*}
\EE\norm{D}\leq \frac{c_1\sqrt{\gamma\ln 
d}}{\sqrt{k}}\sqrt{\EE\norm{D}+\norm{A}}.
\end{equation*}
Denoting by $\alpha=\left(\frac{c_1\sqrt{\gamma\ln d}}{\sqrt{k}}\right)^2$, 
we have
\[
(\EE\norm{D})^2-\alpha\EE\norm{D}-\alpha \norm{A}\leq 0.
\] 
Therefore, we get 
$\EE\norm{D}\leq \alpha+\sqrt{\alpha \norm{A}}$, and thus the inequality
\[
\EE\norm{D}\leq \frac{c_1^2\gamma\ln d}{k}+\frac{c_1\sqrt{\gamma \norm{A} \ln 
d}}{\sqrt{k}}\leq \varepsilon
\]
holds for $k\geq \frac{c\gamma(1+\norm{A})\ln d}{\varepsilon^2}$ with 
sufficiently large $c$. 
Theorem~\ref{prop:rudelson} is proved.
\end{proof}

\section{Upper bound  for non-symmetric diads -- Proof of 
Theorem~\ref{thm:RudelsonStabilityVeryCloseToBall}}\label{sec:nonSymmRudelson}

Theorem~\ref{thm:RudelsonStabilityVeryCloseToBall} is an immediate corollary to 
the following result.

\begin{thm}\label{thm:BM_dist_norm_estimation} 
Let $K$ and $L$ be convex bodies in $\Red$ and $\rj(K)\leq 2$ such that
\[
B_2^d \subseteq K\subseteq r(K) B_2^d.
\]
Assume that $K$ is in John's position in $L$, and the vectors $u_i$ and $v_i$ 
for $i \in [m]$  satisfy the conditions of Definition \ref{def:john}. 

Then, for any  $0<\varepsilon<1$ and 
\[	
k\geq \frac{cd\ln d}{\varepsilon^2}\rj(K)\left(d\sqrt{\rj(K)-1}+1\right),
\]
where $c>0$ is a universal constant,
there is a multiset $\sigma \subset [m]$ of size $k$ such that
\eqref{eq:nonsymmapproxjohn} and \eqref{eq:approxbalanced} 
hold.
\end{thm}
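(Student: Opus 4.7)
The plan is to \emph{lift} the problem to a positive semi-definite setting in dimension $2d+1$ so that Theorem~\ref{prop:rudelson} applies directly, and then to read off the required bounds from individual blocks of the approximation.

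For each $i\in[m]$, set $\hat w_i := (1,\,\sqrt{d}\,u_i,\,\sqrt{d}\,v_i)^\top \in \Re^{2d+1}$ and let $\hat Q_i := \hat w_i \otimes \hat w_i$, a rank-one positive semi-definite matrix on $\Re^{2d+1}$. Writing $S_u := \sum_i \alpha_i u_i\otimes u_i$ and $S_v := \sum_i \alpha_i v_i\otimes v_i$, conditions \eqref{eq:H1} and \eqref{eq:H2} combine to give the block structure
\[
\hat A := \sum_{i=1}^m \alpha_i \hat Q_i = \begin{pmatrix} 1 & 0 & 0 \\ 0 & dS_u & I \\ 0 & I & dS_v \end{pmatrix},
\]
in which the off-central identity encodes \eqref{eq:H1} after the $\sqrt{d}$-rescaling and the vanishing first row and column encode the balancing condition \eqref{eq:H2}.

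The heart of the argument is a sharp bound on $\norm{\hat A}$. From $dS_v - I = d\sum_i\alpha_i v_i\otimes(v_i - u_i)$ together with the Cauchy--Schwarz estimate $\norm{\sum_i\alpha_i a_i\otimes b_i} \leq \bigl(\sum_i\alpha_i\norm{a_i}^2\bigr)^{1/2}\bigl(\sum_i\alpha_i\norm{b_i}^2\bigr)^{1/2}$, one obtains
\[
\norm{dS_v - I} \leq d\Bigl(\sum_i\alpha_i\norm{v_i}^2\Bigr)^{1/2}\Bigl(\sum_i\alpha_i\norm{u_i - v_i}^2\Bigr)^{1/2} \leq d\sqrt{\rj(K)^2-1},
\]
where the last step uses $\norm{v_i}\leq 1$ (from $B_2^d\subseteq K$, hence $K\pol\subseteq B_2^d$), $\norm{u_i}\leq \rj(K)$, and the contact identity $\norm{u_i-v_i}^2 = \norm{u_i}^2+\norm{v_i}^2 - 2$, which is a consequence of $\iprod{u_i}{v_i}=1$. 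An analogous computation yields $\norm{dS_u - I}\leq d\rj(K)\sqrt{\rj(K)^2-1}$. Splitting $\hat A$ into its block-diagonal piece and the $\bigl(\begin{smallmatrix}0&I\\I&0\end{smallmatrix}\bigr)$ piece (of norm $1$) then gives $\norm{\hat A} \leq \max(\norm{dS_u},\norm{dS_v}) + 1 = O(1 + d\rj(K)\sqrt{\rj(K)-1})$ after using $\rj(K)^2-1 \leq 3(\rj(K)-1)$ for $\rj(K)\leq 2$, while $\norm{\hat Q_i} = \norm{\hat w_i}^2 \leq 1 + d(\rj(K)^2+1) = O(d)$.

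Theorem~\ref{prop:rudelson} applied in dimension $2d+1$ to the random $\hat Q$ taking value $\hat Q_i$ with probability $\alpha_i$ then guarantees $\EEop_{\sigma}\norm{\tfrac{1}{k}\sum_{i\in\sigma}\hat Q_i - \hat A}\leq \varepsilon$ as soon as $k \geq c\gamma(1+\norm{\hat A})\ln(2d+1)/\varepsilon^2$, which by the estimates above is implied by the stated $k \geq c'\,d\,\rj(K)\,(d\sqrt{\rj(K)-1}+1)\ln d/\varepsilon^2$. The probabilistic method then produces a concrete multiset $\sigma$ of size $k$ achieving this bound, and since the operator norm of any block of a matrix is bounded by the operator norm of the whole matrix, extracting the off-central $d\times d$ block yields \eqref{eq:nonsymmapproxjohn}, while the first-row $1\times d$ blocks yield \eqref{eq:approxbalanced} (the factor $1/\sqrt d$ coming from the $\sqrt{d}$-normalization built into $\hat w_i$). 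The main obstacle is precisely the sharp bound on $\norm{\hat A}$: without exploiting $\iprod{u_i}{v_i}=1$, the naive estimate $\norm{dS_u},\norm{dS_v}\leq d\rj(K)^2$ would give only $k = O(d^2\ln d/\varepsilon^2)$, insensitive to the closeness of $K$ to $B_2^d$, so it is the John contact condition that converts the geometric gap $\rj(K)-1$ into the sharp rate.
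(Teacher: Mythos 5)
Your proof is correct, and it is a genuinely different route from the paper's. The paper lifts to $\Re^{d+1}$ via $a_i=(v_i,1/\sqrt{d})$, $b_i=(u_i,1/\sqrt{d})$, keeps the matrices $Q_i=d\,a_i\otimes b_i$ non-symmetric, and then passes through the intermediate Theorem~\ref{thm:non-symmetric}, whose engine is the symmetrization $U_i=(Q_iQ_i^\ast+Q_i^\ast Q_i)/(2\gamma)$ (rank-$\le 2$ PSD matrices on $\Re^{d+1}$); the crux there is the bound on $b=\norm{\sum_i\alpha_i U_i}$, proven via the estimates \eqref{eq:uuvsuv}--\eqref{eq:vvvsuv} which compare $d^2\norm{a_i}^2 b_i\otimes b_i$ to $\gamma d\,a_i\otimes b_i$ and again rely on $\iprod{a_i}{b_i}=1+1/d$. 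You instead lift to $\Re^{2d+1}$ so that $u_i\otimes v_i$ sits as an off-diagonal block of the rank-one PSD matrix $\hat w_i\otimes\hat w_i$, and you apply Theorem~\ref{prop:rudelson} in one shot. Both proofs hinge on the same geometric input --- the identity $\norm{u_i-v_i}^2=\norm{u_i}^2+\norm{v_i}^2-2$ from the contact condition $\iprod{u_i}{v_i}=1$, which turns $\rj(K)-1$ into the controlling parameter --- but your version avoids Theorem~\ref{thm:non-symmetric} entirely and needs only the Cauchy--Schwarz block estimate on $\norm{dS_u-I}$, $\norm{dS_v-I}$ instead of the paper's \eqref{eq:uuvsuv}--\eqref{eq:vvvsuv}. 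A small side benefit of the $\Re^{2d+1}$ lift is that it makes the derivation of \eqref{eq:approxbalanced} explicit (it is the first-row block of the error matrix), whereas the paper delegates this to the reader as ``standard.'' Your bound is in fact slightly tighter, $O\!\left(d(1+d\sqrt{\rj(K)-1})\ln d/\varepsilon^2\right)$, which is dominated by the theorem's stated $k$ since $\rj(K)\ge 1$, so the conclusion follows.

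Two minor remarks that do not affect correctness: (i) with the paper's convention $(u\otimes v)x=\iprod{u}{x}v$ one has $u_i\otimes v_i=v_iu_i^{\top}$, so the block of $\frac{1}{k}\sum_{i\in\sigma}\hat Q_i-\hat A$ that directly equals $\frac{d}{k}\sum_{i\in\sigma}u_i\otimes v_i - I$ is the $(3,2)$ block rather than the $(2,3)$ block --- but as these are transposes the distinction is immaterial; (ii) you should write $\norm{\hat A}\le\max\{1,\norm{dS_u},\norm{dS_v}\}+1$ rather than dropping the $1$ inside the maximum, though this again changes nothing since your subsequent estimates absorb it.
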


We will show that Theorem~\ref{thm:BM_dist_norm_estimation} follows from the 
following more general result. 

\begin{thm}\label{thm:non-symmetric}
Let $0<\varepsilon<1$ be given, and let $Q_1, \ldots, Q_m$ and $A$ 
be square matrices of size $d$ such that 
\[
    	A = \sum_{i=1}^m\alpha_i Q_i,
\]
where $\alpha_i\geq 0$ and $\sum_{i=1}^m\alpha_i=1$. 
Set $\gamma = \max_i \norm{Q_i}$, and
\[
b = \frac{1}{2\gamma} \norm{\sum_{i=1}^m \alpha_i \left(Q_i Q_i^{\ast} + 
Q^{\ast}_i Q_i \right)}.
\]
Assume that
\[
k\geq \frac{c\gamma(1 + b)\ln d}{\varepsilon^2}
\] 
for some constant $c>0$.

Then there is a multi-subset $\sigma \subset[m]$ of size $k$ such that
\[
\norm{\frac{1}{k}\sum_{i\in \sigma} Q_i - A}\leq\varepsilon.
\]
\end{thm}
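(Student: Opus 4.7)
The plan is to mimic the proof of Theorem~\ref{prop:rudelson} verbatim through the symmetrization/Hölder/Lust--Piquard sequence, and then control the resulting positive semi-definite sum by a bootstrap that re-invokes Theorem~\ref{prop:rudelson} itself.

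\textbf{Setup and the first chain.} I sample $\tilde Q_1,\dots,\tilde Q_k$ iid from the distribution that gives mass $\alpha_i$ to $Q_i$, so $\EE\tilde Q_\ell=A$. Setting $D=\tfrac1k\sum_\ell\tilde Q_\ell-A$, it suffices to show $\EE\|D\|\le\varepsilon$: because $\|D\|\ge 0$, some realization of $\tilde Q_1,\ldots,\tilde Q_k$, and hence a multi-set $\sigma\subset[m]$ of size~$k$, then attains $\|D\|\le\varepsilon$. The portion of the proof of Theorem~\ref{prop:rudelson} up to and including step~(L--P) does not use PSD-ness; running it here gives, for $p=\ln d$,
\[
\EE\|D\|\le \EE\sch{D}\le\frac{c_0\sqrt{p}}{k}\,\EE\sch{\Bigl(\sum_{\ell=1}^k R_\ell\Bigr)^{1/2}},\qquad R_\ell:=\tilde Q_\ell\tilde Q_\ell^{\ast}+\tilde Q_\ell^{\ast}\tilde Q_\ell\in\Sch.
\]
Since $\sum_\ell R_\ell\in\Sch$, \eqref{eq:schattenvsoperatornorm} gives $\sch{(\sum R_\ell)^{1/2}}\le e\norm{\sum R_\ell}^{1/2}$, and Jensen's inequality yields
\[
\EE\|D\|\le\frac{c_1\sqrt{p}}{\sqrt{k}}\sqrt{\tfrac{1}{k}\EE\Bigl\|\sum_{\ell=1}^k R_\ell\Bigr\|}.
\]

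\textbf{Bootstrap.} The $R_\ell$ are iid elements of $\Sch$ with mean $\bar R=\sum_i\alpha_i(Q_iQ_i^{\ast}+Q_i^{\ast}Q_i)$ satisfying $\|\bar R\|=2\gamma b$ and $\|R_\ell\|\le 2\gamma^2$. The proof of Theorem~\ref{prop:rudelson} in fact establishes the valid-for-every-$k$ inequality $\EE\|\tfrac1k\sum R_\ell-\bar R\|\le\alpha+\sqrt{\alpha\|\bar R\|}$ with $\alpha=c_1^2(2\gamma^2)p/k$; applied to the $R_\ell$ it gives
\[
\tfrac1k\,\EE\Bigl\|\sum_{\ell=1}^k R_\ell\Bigr\|\;\le\;\|\bar R\|+\EE\Bigl\|\tfrac1k\sum_\ell R_\ell-\bar R\Bigr\|\;\le\;2\gamma b+\frac{2c_1^2\gamma^2 p}{k}+2c_1\gamma\sqrt{\frac{\gamma b p}{k}}.
\]

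\textbf{Combining and conclusion.} Plugging back into the bound for $\EE\|D\|$ and using $\sqrt{a_1+a_2+a_3}\le\sqrt{a_1}+\sqrt{a_2}+\sqrt{a_3}$, the estimate splits into three summands of respective orders
\[
\sqrt{\gamma b p/k},\qquad \gamma p/k,\qquad \bigl(\gamma^3 b p^3/k^3\bigr)^{1/4}.
\]
A direct check shows each is at most $\varepsilon/3$ once $k\ge c\gamma(1+b)p/\varepsilon^2$ for a sufficiently large universal $c$: the first gives the desired leading order; the second is sub-leading because $\varepsilon<1$ (so $\gamma p/\varepsilon\le \gamma p/\varepsilon^2$); and the third is verified in the two regimes $b\le 1$ and $b\ge 1$ using the elementary inequality $b^{1/4}/(1+b)^{3/4}\le 1$.

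\textbf{Main obstacle.} The danger in the bootstrap is that $\|R_\ell\|$ is bounded only by $2\gamma^2$, not by $2\gamma$, so a naive application of Theorem~\ref{prop:rudelson} to the $R_\ell$ looks as if it will force $k\gtrsim\gamma^2 p/\varepsilon^2$. The crux of the proof is that this extra factor of $\gamma$ appears inside the outer $\sqrt{\cdot}$ multiplied by $1/k$, and is therefore absorbed into the strictly sub-leading second and third terms; the leading $\sqrt{\gamma b p/k}$ behaviour survives, giving exactly the threshold $k\ge c\gamma(1+b)p/\varepsilon^2$.
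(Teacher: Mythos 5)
Your proposal is correct and follows essentially the same route as the paper: symmetrize by Rademacher variables, apply H\"older and Lust--Piquard, and then bootstrap the resulting positive semi-definite sum $\sum_\ell(\tilde Q_\ell\tilde Q_\ell^{\ast}+\tilde Q_\ell^{\ast}\tilde Q_\ell)$ via Theorem~\ref{prop:rudelson}. The paper streamlines the bootstrap by first normalizing to $U_i=\frac{1}{2\gamma}(Q_iQ_i^{\ast}+Q_i^{\ast}Q_i)$, so that $\|U_i\|\le\gamma$ and $\|B\|=b$ let Theorem~\ref{prop:rudelson} apply as a black box under the stated threshold (giving $\EE\|\tfrac1k\sum U_j-B\|\le\varepsilon$ in one stroke); your un-normalized version with $\|R_\ell\|\le 2\gamma^2$ reaches the same conclusion by your explicit three-term split, and your closing paragraph correctly identifies why the extra factor of $\gamma$ is harmless.
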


\subsection{Proof of Theorem~\ref{thm:non-symmetric}}
Randomly draw $k$ elements from $[m]$, each time taking each element 
with 
probability $\frac{1}{m}$, and denote by $\sigma$ the random multisubset of 
$[m]$ that is 
obtained.

Set
\[
 U_i = \frac{1}{2\gamma} \left(Q_i Q_i^{\ast} + Q^{\ast}_i Q_i \right),
 \quad
 B =  \sum_{i=1}^m \alpha_i  U_i.
 \quad \mbox{ and  }\quad 
 E_B = \mathop\EE\limits_\sigma \norm{\frac{1}{k } \sum\limits_{j\in 
 \sigma} U_j - B},
\]
Then $b = \norm{B}$, and $\norm{U_i}\leq \gamma$.

Since the $U_i$ are positive semi-definite matrices, we can apply 
Theorem~\ref{prop:rudelson} and get that
$E_B \leq \varepsilon.$

Setting $p=\ln d$, we obtain that
\[
\frac{1}{k}\EEop_{\sigma} \left\{\sch{\left(\sum\limits_{j \in \sigma}   
U_j\right)^{1/2}}\right\} = 
\frac{1}{\sqrt{k}}\EEop_{\sigma} \left\{\sch{\left(\frac{1}{k}\sum\limits_{j 
\in 
\sigma}   U_j\right)^{1/2}}\right\} 
\]
\[
\stackrel{\eqref{eq:schattenvsoperatornorm}}{\leq} 
\frac{e}{\sqrt{k}}\EEop_{\sigma} \left\{\norm{\left(\frac{1}{k}\sum\limits_{j 
\in 
\sigma}   U_j\right)^{1/2}}\right\}  =
\frac{e}{\sqrt{k}}\EEop_{\sigma} \left\{\norm{\frac{1}{k}\sum\limits_{j \in 
\sigma}   U_j}^{1/2}\right\} 
\]
\[
\stackrel{\mbox{(T)}}{\leq} 
\frac{e}{\sqrt{k}}\EEop_{\sigma} \left\{\left(\norm{\frac{1}{k}\sum\limits_{j 
\in 
\sigma}   U_j - B} + \norm{B}\right)^{1/2}\right\}
\]
\begin{equation}\label{eq:symmetricboundhere}
\stackrel{\mbox{(H)}}{\leq} 
\frac{e}{\sqrt{k}} \left(\mathop\EE\limits_\sigma \left( \norm{\frac{1}{k } 
\sum\limits_{j\in \sigma} U_j  - B} + \norm{B} \right)\right)^{1/2} =
\frac{e}{\sqrt{k}} \left(E_B + \norm{B}\right)^{1/2}
 \leq 
 \frac{e \sqrt{1 + b}}{\sqrt{k}}. 
\end{equation}
Here, (T) and (H) follow from the triangle inequality and H{\"o}lder's 
inequality respectively.

Set
\[
D_\sigma=\frac{1}{k}\sum\limits_{j\in \sigma} Q_j - A.
\]
Our aim is to show that $\EEop_{\sigma}\norm{D_\sigma}\leq\varepsilon$.

Using the notation $\mathbf{r}$ for Rademacher variables introduced in 
Section~\ref{sec:Rudelson}, we have
\[
\EEop_{\sigma}\norm{D_\sigma}\leq  \EEop_{\sigma}\sch{D_\sigma} 
\stackrel{\mbox{(S)}}{\leq}
\frac{2}{k}\EEop_{\sigma}\EEop_{\mathbf{r}}\sch{\sum_{i\in \sigma} r_i Q_i}
\stackrel{\mbox{(H)}}{\leq}
\frac{2}{k}\EEop_{\sigma}\left[\EEop_{\mathbf{r}}\sch{\sum_{i\in \sigma} r_i 
Q_i}^p\right]^{1/p}
\]
\[
\stackrel{\mbox{(L-P)}}{\leq}\frac{c_1 \sqrt{p}}{k}\EEop_{\sigma} \left\{
\sch{\left(\sum\limits_{j \in \sigma}  \left[Q_jQ_j^\ast + Q_j^\ast 
Q_j\right]\right)^{1/2}}
\right\}
=\frac{c_1 \sqrt{p} \sqrt{\gamma}}{k}\EEop_{\sigma} 
\left\{\sch{\left(\sum\limits_{j 
\in \sigma}   U_j\right)^{1/2}}\right\} 
\]
\[ 
\stackrel{\eqref{eq:symmetricboundhere}}{\leq}
 \frac{e c_1 \sqrt{p} \sqrt{\gamma} \sqrt{1 + b}}{\sqrt{k}}  \leq  \varepsilon,
\]
where $c_1$ is some positive constant. Note that (S) and (L-P) follow from 
Lemma~\ref{lem:symmetrizationrademacher} and \eqref{eq:LP} respectively and the 
last inequality holds for a sufficiently large constant $c$, and (H) follows 
from H\"older's 
inequality. This finishes the 
proof of Theorem~\ref{thm:non-symmetric}.

\subsection{Proof of Theorem~\ref{thm:BM_dist_norm_estimation}} 
In order to obtain \eqref{eq:nonsymmapproxjohn} and \eqref{eq:approxbalanced} 
for the vectors $u_i$ and $v_i$ in $\Red$, we will prove 
\eqref{eq:nonsymmapproxjohn} for the vectors $a_i=(v_i, 1/\sqrt{d})$ and 
$b_i=(u_i, 1/\sqrt{d})$ in $\Re^{d+1}$. This lifting argument is standard, so 
we leave it to the reader to verify that the latter claim is indeed sufficient.

Since $u_i$ and $v_i$ satisfy Definition~\ref{def:john}, we have
\[
\sum\limits_{i \in m} \alpha_i d a_i \otimes b_i=I_{d+1},
\]
where $I_{d+1}$ is the identity operator on~$\Re^{d+1}$.
Note, that $\iprod{a_i}{b_i}=1+1/d$. We will assume that $d\geq3$.


For every $i\in[m]$, set $Q_i=d a_i \otimes b_i$. We are going to use the 
notations 
$A, U_i, B,b$, and $\gamma$ as defined in 
Theorem~\ref{thm:non-symmetric}, where $A=I_{d+1}$. 
Since $B_2^d\subseteq K\subseteq \rj(K)B_2^d$, we have 
\[ 
\norm{a_i}^2\leq\rj(K)^2+1/d\text{\quad
and\quad}
\norm{b_i}^2\leq 1+1/d.
\] 
Using $\norm{Q_i} = d \norm{a_i} \norm{b_i}$ and $1\leq\rj(K)$, we obtain
\[
\norm{Q_i}^2\leq 
d^2(\rj(K)^2+1/d)(1+1/d)\leq d^2 \rj(K)^2(1+1/d)^2,
\]
and thus
\begin{equation}\label{eq:gammabound}
\end{equation}
Since $d\norm{a_i}\norm{b_i}\leq \gamma$ and $d/\gamma\geq 1/((1+1/d)\rj(K))$, 
we get
\[
 \norm{\frac{d^2}{\gamma} \norm {a_i}^2b_i\otimes b_i- d a_i\otimes b_i}=
 d\norm{b_i}\norm{\frac{d}{\gamma} \norm{a_i}^2b_i-a_i}
\]\[
=
d \norm{a_i}\norm{b_i}\left[ 
\left(\frac{d\norm{a_i}\norm{b_i}}{\gamma}\right)^2 
+1-\frac{2d(1+1/d)}{\gamma}\right]^{1/2} 
\]
\begin{equation}\label{eq:uuvsuv}
\leq
(1+1/d) d\rj(K) \left(2-2/\rj(K)\right)^\frac12\leq 2d \sqrt{\rj(K)(\rj(K)-1)}. 
\end{equation}
Similarly, we have
\begin{equation}\label{eq:vvvsuv}
 \norm{\frac{d^2}{\gamma} \norm{b_i}^2 a_i\otimes a_i- d b_i\otimes a_i}
 \leq
2d \sqrt{\rj(K)(\rj(K)-1)}. 
\end{equation}

By the definition of $B$, we have
\begin{equation*}
B=\sum_{i=1}^m \alpha_i  U_i
=\sum_{i=1}^m \alpha_i \frac{(Q_i Q_i^{\ast}-\gamma d b_i\otimes 
a_i)+(Q_i^{\ast} Q_i-\gamma d a_i\otimes b_i)}{2\gamma}
+\sum_{i=1}^m \alpha_i \frac{a_i\otimes b_i+b_i\otimes a_i}{2}
\end{equation*}
\begin{equation}
\label{equation:B}
=\sum_{i=1}^m \alpha_i \frac{(Q_i Q_i^{\ast}-\gamma d b_i\otimes 
a_i)+(Q_i^{\ast} Q_i-\gamma d a_i\otimes b_i)}{2\gamma}
+I_{d+1}.
\end{equation}
Using the equations $Q_i 
Q_i^{\ast}=d^2\norm{b_i}^2a_i\otimes a_i$ and $Q_i^{\ast} Q_i=d^2
\norm{a_i}^2b_i\otimes 
b_i$ and inequalities \eqref{eq:uuvsuv} and \eqref{eq:vvvsuv}, we deduce 
from \eqref{equation:B} that
\begin{equation*}
 b=\norm{B}\leq 2d\left(\rj(K)(\rj(K)-1)\right)^{1/2}+1 \leq 4d 
\sqrt{\rj(K)-1}+1,
\end{equation*}
because $\rj(K)\leq 2$. Combining the last inequality with 
\eqref{eq:gammabound}, we apply Theorem~\ref{thm:non-symmetric} with 
$A=I_{d+1}$, and obtain \eqref{eq:nonsymmapproxjohn}, completing the proof of 
Theorem~\ref{thm:BM_dist_norm_estimation}.

\subsection{Symmetrization will not always help}\label{subsec:symmwonthelp}

The main idea of the proof of Theorem~\ref{thm:non-symmetric} is to symmetrize 
the matrices. Its use is limited,  as shown by the following example.

Fix a $\delta>0$, and for $i\in\{1,2\}$, set $u_i=v_i=e_i$. For 
$i\in\{3,\ldots,d\}$, let 
\[
u_i^{+} = e_i + \delta e_1, \quad u_i^{-} = e_i - \delta e_1 ;
\]
and
\[
v_i^{+} =  e_i + \delta e_2, \quad v_i^{-} =  e_i - \delta e_2.
\]
Then clearly,
\[
I = u_1 \otimes v_1 + u_2 \otimes v_2 +  \frac{1}{4}\sum\limits_{3 \leq i 
\leq d} \left( u_i^{+} \otimes v_i^{+} + u_i^{+} \otimes v_i^{-} + u_i^{-} 
\otimes v_i^{+} + u_i^{-} \otimes v_i^{-} \right),
\]
and thus,
\[
I=\frac{1}{4d} \bigg[Q_1+Q_2+Q_3^{++}+Q_3^{+-}+Q_3^{-+}+Q_3^{--}+\ldots+ 
Q_d^{++}+Q_d^{+-}+Q_d^{-+}+Q_d^{--}\bigg],
\]
where $Q_1=4du_1\otimes v_1, Q_2=4du_2\otimes v_2$, and 
$Q_i^{++}=du_i^{+}\otimes v_i^{+}, Q_i^{+-}=du_i^{+}\otimes v_i^{-}$, etc.

Now, the $b$ that appears in Theorem~\ref{thm:non-symmetric} is large. For, say 
$\delta=0.1$, we have $\gamma=16d$, and  $b>0.01d$ (we leave the details to the 
reader), thus, Theorem~\ref{thm:non-symmetric} yields no meaningful result. 
Moreover, the above example can be realized geometrically as contact points (as 
in Definition~\ref{def:john}) of two convex bodies, both constant close to the 
Euclidean ball in the Banach--Mazur distance.

We note however, that it is enough to use only $C d$ vectors to approximate the 
identity, if in Theorem~\ref{thm:non-symmetric} $K$ is an ellipsoid constant 
close to the standard unit ball. 

\section{The log factor is needed -- Proof of 
Theorem~\ref{theorem:rudelsonneedslog}}\label{sec:elloneembedding}

In this section, we prove Theorem~\ref{theorem:rudelsonneedslog}.
First, in Lemma~\ref{lem:noapproxinellone}, we show that in $\ell_1^{t}$, a 
point in the convex hull of other points may not be well approximated in terms 
of the dimension $t$. Then, we use the fact that $\ell_1^{t}$ embeds 
isometrically in $\ell_{\infty}^d$ for $d=2^t$, which embeds isometrically in 
the space of matrices of size $d\times d$.

\begin{lem}\label{lem:noapproxinellone}
Consider the point $a=\frac{1}{12k}(1,\ldots,1)\in \ell^t_1$, where $k$ and $t$ 
are positive integers. Denote by
$e_1,\ldots,e_t$ the standard basis of $\ell_1^t$ and by
$e_{t+1}$ the zero vector.

Then, for any non-empty multiset $\sigma_0\subset [t+1]$ of size $s$, where 
$s\leq 3k$, we have
\[
	\norm{\frac{1}{s}\sum_{i\in \sigma_0} \frac{e_i}{2} - a}_1 \geq 
\frac{t}{12k}.
\] 
\end{lem}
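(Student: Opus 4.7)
The plan is to work coordinate-by-coordinate and exploit the gap between the target coordinate value $\alpha := \frac{1}{12k}$ and the smallest nonzero coordinate value that can appear in the average $\frac{1}{s}\sum_{i\in\sigma_0}\frac{e_i}{2}$.

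First I would set up notation: for each $j\in[t+1]$, let $n_j$ denote the multiplicity of $j$ in $\sigma_0$, so that $\sum_{j=1}^{t+1}n_j=s$ and the $j$-th coordinate of the average is $\frac{n_j}{2s}$ for $j\in[t]$ (while index $t+1$ contributes nothing since $e_{t+1}=0$). The target $a$ has every coordinate equal to $\alpha=\frac{1}{12k}$, so the $\ell_1^t$ distance we wish to bound from below is
\[
\sum_{j=1}^{t}\Bigl|\tfrac{n_j}{2s}-\alpha\Bigr|.
\]

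The key observation, which drives the whole argument, is that the smallest strictly positive value of $\frac{n_j}{2s}$ is $\frac{1}{2s}$, and since $s\leq 3k$ we have
\[
\tfrac{1}{2s}\geq \tfrac{1}{6k}=2\alpha.
\]
Therefore for every $j\in[t]$ there are only two possibilities: either $n_j=0$, in which case $\bigl|\tfrac{n_j}{2s}-\alpha\bigr|=\alpha$; or $n_j\geq 1$, in which case $\tfrac{n_j}{2s}\geq 2\alpha$ and hence $\bigl|\tfrac{n_j}{2s}-\alpha\bigr|\geq\alpha$. In either case the contribution to the $\ell_1$ sum is at least $\alpha$.

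Summing the per-coordinate lower bound over the $t$ coordinates yields
\[
\sum_{j=1}^{t}\Bigl|\tfrac{n_j}{2s}-\alpha\Bigr|\geq t\alpha=\frac{t}{12k},
\]
which is exactly the desired inequality. There is no real obstacle here; the only quantitative point to verify is the hypothesis-driven inequality $\frac{1}{2s}\geq 2\alpha$, which is precisely why the bound $s\leq 3k$ and the specific choice $\alpha=\frac{1}{12k}$ were made. The role of the auxiliary index $t+1$ (with $e_{t+1}=0$) is simply to allow $\sigma_0$ to ``waste'' some of its mass on a vector that does not help approximate $a$, without changing the argument.
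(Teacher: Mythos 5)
Your proof is correct and is essentially identical to the paper's: both argue coordinate-by-coordinate, noting that each coordinate of $\frac{1}{s}\sum_{i\in\sigma_0}e_i/2$ is either $0$ or at least $\frac{1}{2s}\geq\frac{1}{6k}$, so each of the $t$ coordinates contributes at least $\frac{1}{12k}$ to the $\ell_1$ distance. You have merely written out in more detail what the paper compresses into a single sentence.
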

\begin{proof}[Proof of Lemma~\ref{lem:noapproxinellone}]
	Since the $i$-th coordinate $b_i$ of $\frac{1}{s}\sum_{i\in \sigma_0} 
e_i/2$ is either equal to 0 or at least $\frac{1}{2s}\geq \frac{1}{6k}$, we have
$|b_i-\frac{1}{12k}|\geq\frac{1}{12k}$ for every $i\in [t]$, which 
finishes the proof of the lemma.
\end{proof}

Without loss of generality, we may assume that $d=2^t$, where $t$ is a 
non-negative integer. Indeed, assume that we  find the desired matrices $Q_1, 
\dots, Q_n$  for $d=2^t$. Let now $d$ be $d=2^t+h$ with $h<2^t$. Then the 
matrices $Q_1',\dots, Q_n'$ of size $d\times d$ whose upper left corner is 
$Q_i$, and the other entries are zero will satisfy the conditions of the 
theorem. 

Note that it is sufficient to consider multisets $\sigma$ such that
\begin{equation}\label{eq:msigma}
	m/2 <|\sigma|\leq m.
\end{equation}
Indeed, if the theorem is proved for such multisets, then it holds for a 
multiset $\sigma$ with $|\sigma|\leq m/2$: the multiset $\sigma'$ consisting of 
$2^l$ copies of $\sigma$, where $l=\lfloor \log_2 (m/|\sigma|)\rfloor$, 
satisfies~\eqref{eq:msigma}, and thus the statement of the theorem holds for 
$\sigma'$. Since $\sigma'$ consists of several copies of $\sigma$, we can 
easily conclude that Theorem~\ref{theorem:rudelsonneedslog} is true for 
$\sigma$.

Enumerate all $\pm 1$ sequences of length $t$ as 
$s_1,\ldots,s_{d}$. Clearly, the linear map 
\begin{equation*}
 \phi: \ell_1^{t}\rightarrow \ell_{\infty}^d \text{ such that } 
\phi(x)=(\iprod{x}{s_1}, \dots, \iprod{x}{s_d})
\end{equation*}
embeds $\ell_1^{t}$ isometrically into $\ell_{\infty}^d$. We identify 
$\ell_{\infty}^d$ with the subspace of diagonal matrices in the space 
$\Re^{d\times d}$ equipped with the operator norm.

Next, we are going to construct the desired matrices $Q_i$. Let $k$ be an 
integer 
such that 
\begin{equation}
	\label{eq:definition of k}
	k= \left\lfloor \frac{m}{\gamma}\right\rfloor=\left\lfloor 
\frac{t}{96\varepsilon}\right\rfloor,
\end{equation}
so $k\geq 1$. Using the notation introduced in 
Lemma~\ref{lem:noapproxinellone}, for every 
$i\in[t+1]$, put 
\[Q_i=\gamma\psi(e_i/2),\mbox{ where }
\psi(x)=\phi(x-a)+I.
\]
Note that $\psi$ is an affine isometry from $\ell_1^t$ into $\ell_\infty^d$.

By \eqref{eq:definition of k}, we have 
\[
	\left\|\frac{e_i}{2}-a\right\|_1 \leq 
\norm{\frac{e_i}{2}}_1+\norm{a}_1\leq \frac{1}{2}+\frac{t}{12 k}< 
\frac{1}{2}+\frac{96t\varepsilon}{12t}=\frac12 + 8\varepsilon\leq 1,
\]
for every $i\in [t+1]$, and therefore $\norm{Q_i}\leq \gamma\left(\norm{I}+ 
\norm{e_i/2-a}_1\right)<2\gamma$ and the matrix $Q_i$ is positive definite. 

Assuming $\lambda_i=\frac{1}{6k}$ for $i\in[t]$ and 
$\lambda_{t+1}=1-\frac{t}{6k}$, we have
\[
	a=\sum_{i=1}^{t+1}\lambda_i \frac{e_i}{2}.
\] 
Since $\sum_{i=1}^{t+1}\lambda_i=1$ and $\lambda_i\geq 0$ for every 
$i\in[t+1]$, we obtain $a\in \conv \left\{\frac{e_1}{2},\dots, 
\frac{e_{t+1}}{2}\right\}$. Thus, 
denoting by $Q_{t+2}$ the zero matrix, we get
\[
\sum_{i=1}^{t+1}\frac{\lambda_i}{\gamma}Q_i+\left(1-\frac{1}{\gamma}\right)Q_{
t+2}=\sum_{i=1}^{t+1}\lambda_i(\phi(e_i/2-a)+I)= 
\phi\left(\sum_{i=1}^{t+1}\lambda_i e_i/2\right)-\phi(a)+I=I,
\] 
that is,
\[
	I\in\conv\{Q_1,\dots, Q_{t+2}\}.
\]

To prove the theorem, assume that there is a multiset 
$\sigma$ of $[t+2]$ with~\eqref{eq:msigma} such that
\begin{equation}\label{eq:inftynormbd}
		 \norm{\frac{1}{|\sigma|}\sum_{i\in \sigma} Q_i -I}<\varepsilon.
\end{equation}

Next, $(t+2)$ is an element of $\sigma$ with multiplicity $|\sigma|-s$ for 
some non-negative integer $s$. Denote by $\sigma_0$ the multi-subset of 
$\sigma$ which does not contain $(t+2)$. Thus, $|\sigma_0|=s$.

Since $\tr (\phi(y))=0$, we obtain $\tr(Q_i)=\gamma \tr(I)=\gamma d$ for every 
$i\in[t+1]$. Thus, it follows from \eqref{eq:inftynormbd} and the inequality 
$|\tr (A)| \leq d\norm{A}$ for an arbitrary matrix $A$ of size $d$ that
\begin{equation}\label{eq:gammask}
\left|\frac{\gamma s}{|\sigma|}-1\right|<\varepsilon,
\end{equation}
and therefore
\[
	\frac{|\sigma|}{\gamma} (1-\varepsilon)<s<\frac{|\sigma|}{\gamma} 
(1+\varepsilon).
\]
By \eqref{eq:gammask}, \eqref{eq:msigma} and \eqref{eq:definition of k}, we 
obtain
\begin{equation}\label{eq:skupper}
s<\frac{|\sigma|}{\gamma}(1+\varepsilon)\leq\frac{m}{\gamma} (1+\varepsilon)< 3k
\end{equation}
and 
\begin{equation}\label{eq:sklower}
s>\frac{|\sigma|}{\gamma}(1-\varepsilon)> \frac{m}{2\gamma} (1-\varepsilon)> 
\frac{k}{4}
\end{equation}
Since $\psi$ is an affine map, we have
\[
\norm{\frac{1}{|\sigma|}\sum_{i\in \sigma} Q_i-I}=
\norm{\frac{\gamma}{|\sigma|}\sum_{i\in 
\sigma_0}\psi\left(\frac{e_i}{2}\right)-I}=
\norm{ \frac{\gamma}{|\sigma|}\psi\left(\sum_{i\in\sigma_0} 
\frac{e_i}{2}\right)-I}=
\]
\[
\norm{\frac{\gamma}{|\sigma|}\phi\left(\sum_{i\in \sigma_0} \frac{e_i}{2}  - 
sa\right) +\frac{\gamma s}{|\sigma|}I - I}  \geq
\]
\[
\frac{\gamma s}{|\sigma|}\norm{\phi\left(\frac{1}{s}\sum_{i\in\sigma_0} 
 \frac{e_i}{2}-a\right)}-\left|\frac{\gamma 
s}{|\sigma|}-1\right|\geq
\]
\[
\frac{\gamma s}{|\sigma|}\norm{\frac{1}{s}\sum_{i\in\sigma_0} 
\frac{e_i}{2}-a}_1-\varepsilon,
\]
where, in the last inequality, we combine \eqref{eq:gammask} with the fact that 
$\phi$ is an isometry.

By \eqref{eq:skupper}, we may apply Lemma~\ref{lem:noapproxinellone} to the 
multiset 
$\sigma_0$ to obtain the inequality
\[
\norm{\frac{1}{|\sigma|}\sum_{i\in \sigma} Q_i-I} > 
\frac{\gamma s}{|\sigma|}\cdot\frac{t}{12k}-\varepsilon,
\]
which, by \eqref{eq:inftynormbd} and \eqref{eq:sklower}, yields
\[
	2\varepsilon>\frac{\gamma t}{48|\sigma|},
\]
and thus
\[
	m\geq |\sigma|>\frac{\gamma t}{96\varepsilon},
\]
completing the proof of Theorem~\ref{theorem:rudelsonneedslog}.

\section{Slow approximation far from the ball -- Proof of 
Theorem~\ref{thm:BM_lowerbound}}\label{sec:slowfar}

To prove Theorem~\ref{thm:BM_lowerbound}, let $\{e_1, \ldots, e_d\}$ be the 
standard basis in $\Red$, and set $d^{\prime} = 2^{\lfloor \log_2 d \rfloor}$. 
Clearly, $d^{\prime} \ge d/2 > \delta^2 + 1$.
For every $i \in [d]$ and $j \in \left[d^{\prime} \right]$, let $w_i^j$ be a 
vector satisfying the following conditions:
\begin{enumerate}
\item $w_i^j\in\bd C$, and $\iprod{ w_i^j}{e_i}  = 1$ for all $i \in [d]$ and 
$j \in \left[d^{\prime} \right]$; 
\item $|w_i^j - e_i| = \delta$ for all $j \in [d^{\prime}]$;
\item $w_i^1, \ldots, w_i^{d^{\prime}}$ are the vertices of the 
$(d^{\prime}-1)$-dimensional 
regular simplex centered at $e_i$. In particular, 
$\sum_{j=1}^{d^{\prime}} w_i^j = d^{\prime} e_i$.
\end{enumerate}

In order to prove the existence of $w_i^j$, it is clearly sufficient to show 
the following.
\begin{lem}\label{lem:simplexincube}
 Let $D=2^k$ for some positive integer $k$. Then there is a
$(D-1)$-dimensional regular simplex $\conv\{p^1,\ldots,p^D\}$  contained in the 
$(D-1)$-dimensional cube $\{x\in[-1,1]^D\st \iprod{(1,0,\ldots,0)}{x}=1\}$ with 
$|(1,0,\ldots,0)-p^j|=\sqrt{D-1}$ for all $j\in[D-1]$.
\end{lem}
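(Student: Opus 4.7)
The natural tool here is a Hadamard matrix produced by the Sylvester construction: for $D=2^k$, define $H_1=(1)$ and recursively
\[
H_{2n} = \begin{pmatrix} H_n & H_n \\ H_n & -H_n \end{pmatrix},
\]
so that $H_D$ has all entries in $\{-1,+1\}$, satisfies $H_D H_D^T = D\cdot I_D$, and has its first column equal to the all-ones vector. I would take $p^1,\dots,p^D$ to be the rows of $H_D$.

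First I would check containment in the face. Since every entry of $H_D$ is $\pm 1$, each $p^j$ lies in $[-1,1]^D$, and since column $1$ is all $+1$'s, each $p^j$ has first coordinate equal to $1$. Hence every $p^j$ lies in the hyperplane $\{x_1 = 1\}$, so $\conv\{p^1,\dots,p^D\}$ is contained in the $(D-1)$-dimensional cube described in the statement.

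Next I would verify the metric conditions. Writing $e = (1,0,\dots,0)$, the orthogonality of the rows gives $\iprod{p^i}{p^j} = D\,\delta_{ij}$, while $\iprod{p^j}{e} = 1$ by construction. Therefore
\[
\|p^j - e\|^2 = \|p^j\|^2 - 2\iprod{p^j}{e} + 1 = D - 2 + 1 = D-1,
\]
and for $i\neq j$,
\[
\iprod{p^i - e}{p^j - e} = 0 - 1 - 1 + 1 = -1.
\]
These are exactly the inner-product data characterizing a regular simplex with $D$ vertices whose centroid is $e$ and whose circumradius equals $\sqrt{D-1}$; indeed, the Hadamard property also implies $\sum_j p^j = D\,e$ because every column other than the first is orthogonal to the all-ones vector.

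The only ``obstacle'' is recognizing that the existence of a Hadamard matrix of order $D=2^k$ is precisely the combinatorial datum needed to embed a regular $(D-1)$-simplex isometrically into the face of the cube with the required circumradius; once this observation is made, the verification reduces to elementary bookkeeping with inner products.
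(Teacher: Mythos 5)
Your proposal is correct and takes essentially the same approach as the paper: both use the Sylvester (Walsh) construction of Hadamard matrices of order $D=2^k$ and take its rows/columns as the vertices $p^1,\dots,p^D$ (these coincide since the Sylvester matrices are symmetric). The paper simply asserts the verification, whereas you carry out the inner-product bookkeeping explicitly, but the underlying idea is identical.
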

\begin{proof}[Proof of Lemma~\ref{lem:simplexincube}]
The simplex can be constructed in a standard way using Hadamard matrices as 
follows.
Recall that Walsh matrices (the simplest examples of Hadamard matrices) 
are defined by the following recursive construction
\[
H_{1} ={\begin{bmatrix}1\end{bmatrix}},\quad 
H_{2} ={\begin{bmatrix}1&1\\1&-1\end{bmatrix}},\quad 
\dots, \quad   
H_{2^{k}}={\begin{bmatrix}H_{2^{k-1}}&H_{2^{k-1}}\\H_{2^{k-1}}&-H_{2^{k-1}}
\end{bmatrix}}.
\]
Let $p^1,\ldots, p^D$ be the columns of $H_{2^k}$. They are pairwise 
orthogonal, and each one is of Euclidean length $\sqrt{D}$. They clearly 
satisfy the requirements of Lemma~\ref{lem:simplexincube}.
\end{proof}

Consider the following convex body
\begin{equation}
K=\conv\left(B_2^d\cup \{\pm w_i^j\}_{i \in [d],\; j \in [d^{\prime}]}\right).
\end{equation}
and points $u_{(i-1) d^{\prime} + j}=w_i^j$  and $v_{(i-1) d^{\prime}+ j}=e_i$ 
for 
$i \in [d]$ and $j \in \left[ d^{\prime} \right].$

We check that the convex body $K$ and the points $u_k, v_k$ (with $k\in 
[dd^{\prime}]$) satisfy 
the conditions of the theorem.
First, by Theorem~\ref{thm:ball_theorem}, the ball $B_2^d$ is the maximum 
volume 
ellipsoid in $K$. Second, $u_k\in \bd K\cap \bd C$ and $v_k\in \bd{K\pol} \cap 
\bd{C\pol}$. 

Since $\sum_{j=1}^{d^{\prime}} w_i^j=d^{\prime}e_i$, we have
\[
I =\frac{1}{d^{\prime}} \sum_{i=1}^d\sum_{j=1}^{d^{\prime}} w_i^j \otimes 
e_i=\frac{1}{d d^{\prime}} 
\sum_{k=1}^{d d^{\prime}} d u_k\otimes v_k.
\]

For simplicity, denote by $Q_{ij}$ the operator $d u_{(i-1) 
d^{\prime}+j}\otimes 
v_{(i-1) d^{\prime} + j}=d w_i^j \otimes e_i$.

Let $M\subset [d]\times [d^{\prime}]$ be a subset of the set of pairs of 
indices and let $\beta_{ij}$ 
be non-zero scalars, where $(i,j)\in M$, such that $A=\sum_{(i,j)\in 
M}\beta_{ij}Q_{ij}$ is an $\varepsilon$-approximation of $I$, that is,
$\|A-I\|\leq \varepsilon$.

Considering $M$ as a $0-1$ matrix of size $d\times d^{\prime}$, we may assume 
that the 
first row contains the smallest number of ones, denote this number by $\ell$.
Since $d^{\prime} \ge d/2,$ it is sufficient to show that 
$\ell\geq\min\left\{\frac{d^{\prime}}{2}, 
\left(\frac{\delta}{4\varepsilon}\right)^2\right\}$.
We will assume that $\ell<\frac{d^{\prime}}{2}$, and will show that 
$\ell\geq\left(\frac{\delta}{4\varepsilon}\right)^2$.

Without loss of generality, assume that $\beta_{1k}=0$ if $(1,k)\not\in M$. 
Since $\norm{A - I} \leq \varepsilon,$ we have
\[
	\varepsilon  \geq \left\|\left(A - I\right) e_1\right\| \geq
	 \left|\iprod{\left(A - I\right) e_1} {e_1}\right|  = 
	 \left|\sum\limits_{j=1}^{d^{\prime}} \beta_{1j} d^{\prime} 
\iprod{w_1^j}{e_1} - 1\right|= 
\left|d^{\prime} \sum\limits_{j=1}^{d^{\prime}} \beta_{1j} - 1\right|. 
\]
Therefore,  
\begin{equation}
\label{eqn:coef_sum_est}
 d^{\prime} \sum\limits_{j=1}^{d^{\prime}} \beta_{1j}  \geq 1-\varepsilon 
\geq \frac{1}{2}.
\end{equation}
Consider the vectors
\[
	y=   {\sum\limits_{j \st \beta_{1j} \neq 0} (w_1^j - 
e_1)} \text{ and } x=y/\|y\|.
\]
Note that $x$ is a unit vector with $\iprod{x}{e_1}= 0$ and
\[ \iprod{(w_1^j - e_1)}{x} = \iprod{w_1^j}{x}=\frac{\delta}{\sqrt{\ell}} 
\sqrt{\frac{d^{\prime} - \ell}{d^{\prime} - 1}} 
\geq 
\frac{\delta}{{\sqrt{2\ell}}}
\]
for all $j$ such that $\beta_{1j}\ne0$. The last inequality holds by 
the assumption $\ell< d^{\prime}/2$.

On the other hand, by \eqref{eqn:coef_sum_est}, 
\begin{gather*}
\varepsilon \geq \norm{\left(A - I\right) x} \geq
\left|\iprod{\left(A - I\right) x}{e_1}\right| = 
\iprod{\left(\sum\limits_{j=1}^{d^{\prime}} \beta_{1j} Q_{1j} \right) x}{e_1}\\
=d\sum\limits_{j=1}^{d^{\prime}} \beta_{1j} \iprod{w_1^j}{x}  = 
d \sum\limits_{j=1}^{d^{\prime}} \beta_{1j}  \iprod{(w_1^j - e_1)}{x} 
\geq 
d^{\prime} \sum\limits_{j=1}^{d^{\prime}} \beta_{1j} \frac{\delta}{{ 
\sqrt{2\ell}}} \geq 
\frac{\delta}{4 
\sqrt\ell}.
\end{gather*}
Thus, $\ell \geq \left(\frac{\delta}{4\varepsilon}\right)^2$ as needed, and the 
proof of Theorem~\ref{thm:BM_lowerbound} is complete.

\invisiblesection{Acknowledgement}
\subsection*{Acknowledgement}
We thank the referees for a number of important citations, as well as for the 
suggestions that improved the presentation.

Part of the research was carried out while the three authors were members of 
J\'anos Pach's chair of DCG at EPFL, Lausanne, which was supported by Swiss 
National Science Foundation Grants 200020-162884 and 200021-175977.

G.I. was supported also by the Swiss National Science Foundation grant 
200021-179133 and by the Russian Foundation for Basic Research, project 
18-01-00036A. M.N. was supported also by the National Research, Development and 
Innovation Fund grant K119670. A.P. was supported in part also by the Leading 
Scientific Schools of Russia through Grant No. NSh-6760.2018.1 (Theorems 1.1 and 
1.2) and by the Ministry of Education and Science of the Russian Federation in 
the framework of MegaGrant no 075-15-2019-1926 (Theorems 1.7 and 1.8).

\bibliographystyle{alpha}
\bibliography{biblio}

\end{document}